    \tikzstyle{vertex}=[circle, fill, inner sep=0pt, minimum size=6pt] 
    \tikzset{cross/.style={cross out, draw=black,line width= .4ex, minimum size=5pt, inner sep=0pt, outer sep=0pt}, cross/.default={1pt}}
\newtheorem{theorem}{Theorem}
\newtheorem{cor}[theorem]{Corollary}
\newtheorem{lemma}[theorem]{Lemma}
\newtheorem{obs}[theorem]{Observation}
\newtheorem*{claim}{Claim}
\newcommand{\R}{\mathds{R}}
\newcommand{\N}{\mathds{N}}
\newcommand{\Z}{\mathds{Z}}
\newcommand{\Q}{\mathds{Q}}
\renewcommand{\S}{\ensuremath{\mathcal{S}}}
\newcommand{\NN}{\ensuremath{\mathcal{N}}}
\newcommand{\ZZ}{\ensuremath{\mathcal{Z}}}
\newcommand{\D}{\ensuremath{\mathcal{D}}}
\newcommand{\C}{\ensuremath{\mathcal{C}}}
\newcommand{\U}{\ensuremath{\mathcal{U}}}
\newcommand{\V}{\ensuremath{\mathcal{V}}}
\newcommand{\SBB}{\ensuremath{\mathcal{T}}\xspace}
\newcommand{\T}{^\top}
\newcommand{\define}{\coloneqq}
\DeclarePairedDelimiterX{\card}[1]{\lvert}{\rvert}{#1}
\DeclareMathOperator{\anc}{anc}
\DeclareMathOperator{\poly}{poly}
\DeclareMathOperator{\range}{range}
\DeclareMathOperator{\Prob}{Pr}
\begin{document}
\title{Sub-Exponential Lower Bounds for Branch-and-Bound with General Disjunctions via Interpolation}
\author{Max Gl\"aser
\and
Marc E. Pfetsch
}


\maketitle

\abstract{\noindent This paper investigates linear programming based branch-and-bound
  using general disjunctions, also known as stabbing planes, for solving
  integer programs. We derive the first sub-exponential lower bound
  (in the encoding length $L$ of the integer program) for the size of a general
  branch-and-bound tree for a particular class of (compact) integer
  programs, namely $\smash{2^{\Omega(L^{1/12 -\epsilon})}}$ for every~$\epsilon >0$.
  This is achieved by showing that general branch-and-bound
  admits quasi-feasible monotone real interpolation, which allows us to
  utilize sub-exponential lower-bounds for monotone real circuits
  separating the so-called clique-coloring pair.
  Moreover, this also implies that refuting $\Theta(\log(n))$-CNFs requires
  size~$2^{n^{\Omega(1)}}$ branch-and-bound trees with high probability by considering the closely related notion of
  infeasibility certificates introduced by Hrube{\v{s}} and Pudl{\'a}k~\cite{hrubevs2017random}.
  One important ingredient of the proof of our interpolation result
  is that for every general branch-and-bound tree proving
  integer-freeness of a product~$P\times Q$ of two polytopes $P$ and $Q$,
  there exists a closely related branch-and-bound tree for showing
  integer-freeness of~$P$ or one showing integer-freeness of~$Q$. Moreover,
  we prove that monotone real circuits can perform binary search
  efficiently.
}
\section{Introduction}

In recent years, there has been renewed interest in the proof system
associated to branch-and-bound using general disjunctions for solving
integer linear programs (ILPs)~\cite{beame2017stabbing,fleming2021power,Basu2022,dantchev2022depth,dey2022lower,glaser2023computing}; the
literature sometimes also uses the name ``Stabbing Planes'' (SP),
see~\cite{beame2017stabbing}. In each node, a general disjunction of
the form $\alpha\T x \leq \delta \lor \alpha\T x \geq \delta +1$ for $\alpha \in \Z^n$, $\delta \in \Z$
is used to create two child nodes.

Branching on general disjunctions lies at the core of Lenstra's algorithm
for integer programming in fixed dimension~\cite{Len83}. It has also been
used, for example, for special ordered sets~\cite{BeaF76}, exploiting
flatness~\cite{DerV06}, achieving feasibility~\cite{MahC13}, and symmetry
handling~\cite{OstLRS11}.

Nevertheless, the dominant strategy in practice is to employ variable
branching of the form $x_i \leq \delta \lor x_i \geq \delta + 1$ for
some variable~$x_i$, which is the special case where $\alpha$ is the $i$th
unit vector. Some reasons for this choice are that the selection of a
branching disjunction is easier, the sparsity of the constraint matrix is
not increased, and it often allows to fix variables, e.g., if the variables
are binary. This variable branching strategy is then usually enhanced by
the application of cutting planes like Chv\'atal-Gomory cuts in a
branch-and-cut framework, which has seen a tremendous improvement over the
last decades.

As a proof system, branch-and-bound with general disjunctions is not only a
generalization of branch-and-bound using variable disjunctions, but also
branch-and-cut with variable disjunctions and Chv\'atal-Gomory
cuts, see Beame et al.~\cite{beame2017stabbing} (in fact it is even
equivalent to branch-and-cut with general disjunctions
and split-cuts).  Hence, lower bounds on the size of a
branch-and-bound tree using general disjunctions are also lower bounds on
the size of a branch-and-cut tree using Chv\'atal-Gomory cuts. This fact
shows that branch-and-bound using general disjunctions form a quite general
and important algorithm class.

It is thus surprising that so far no family of integer linear programs
provably requiring branch-and-bound trees using general disjunctions of
super-polynomial size (in the encoding length of the program)
without some kind of caveat is known. In fact, no super-linear bounds are
known. In this paper we close this gap by providing a class of compact
integer programs requiring branch-and-bound trees using general
disjunctions of size~$2^{\Omega(L^{1/12 -\epsilon})}$ for every~$\epsilon > 0$,
where $L$ denotes the encoding length of the ILP. This has been posed
as an open problem by Dadush and Tiwari~\cite{dadush2020complexity}.
\smallskip

We briefly survey previous contributions: It is actually relatively easy to
give families of ILPs which require branch-and-bound trees of
exponential size \emph{in the number of variables of the ILP} (but
not the encoding size of the ILP). Here the two main strategies are
the following: Dadush and Tiwari~\cite{dadush2020complexity} argued
that an ILP which is barely infeasible (i.e., removal of any
constraint makes the ILP feasible) must require large
branch-and-bound trees, since it is impossible to construct a
certified branch-and-bound tree which does not use every constraint in
at least one Farkas-certificate at its leaves.
More accurately, the obtained bound is the number of constraints divided
by the number of variables, which is only strong for a large number of constraints. This weakness is
mitigated by an extended formulation
of the ILP they use (with polynomial encoding size in the number
of variables).  However, this formulation also uses continuous variables.
Their strategy was later generalized by Dey et
al.~\cite{dey2022lower}. The other strategy, as investigated by Gl\"aser and
Pfetsch~\cite{glaser2023computing}, is based on finding a large set of points which
have to be associated to different leaves of some given
branch-and-bound trees. Formally, it considers hiding sets, which have
been introduced by Kaibel and Weltge~\cite{kaibel2015lower}.  However,
it seems impossible to derive bounds on the size of a
branch-and-bound tree which exceed the number of the constraints of
the ILP via either of these two strategies.

Despite the fact that no strong lower bounds on the size of a
branch-and-bound tree have been available prior to this paper, Beame et
al.~\cite{beame2017stabbing} gave a family of unsatisfiable CNF
formulas, such that refuting the corresponding ILP requires a
branch-and-bound tree of \emph{depth}~$\Omega(n/\log^2 n)$.  Note that
since trees are not necessarily balanced, this does not yield a good
lower bound on the size of a tree.

Besides lower bounds, there are some structural insights into branch-and-bound
using general disjunctions:
Dadush and Tiwari~\cite{dadush2020complexity} have shown that branch-and-bound
using general disjunctions does not become weaker (with respect
to polynomial simulation) when restricting the coefficients of the disjunctions
to have polynomial \emph{encoding length} (cf.\ Theorem~\ref{thm:coeff_size} below),
which is crucial for our argument. If we restrict the coefficients of branch-and-bound using
general disjunctions to polynomial \emph{size} then branch-and-bound can be quasi-polynomially
simulated by the Chv\'atal-Gomory cutting planes proof system (CG-CP), see Fleming et al.~\cite{fleming2021power}.
Thus, known lower bounds for
CG-CP~\cite{pudlak1997lower,hrubevs2017random,fleming2022} can be lifted to branch-and-bound using general disjunctions with polynomially bounded coefficients.

The strategy we employ in this paper is to show that branch-and-bound
using general disjunction admits \emph{quasi-feasible real monotone
  interpolation} and then lift lower bounds for monotone real circuits
separating the so-called clique-coloring pair
given by Pudl\'ak~\cite{pudlak1997lower}
(cf.\ Theorem~\ref{thm:mon_circ_lb_BMS}) to lower bounds for
branch-and-bound trees. This idea is explained in
Section~\ref{sec:prelim} and has already been successfully used for
other proof systems. Most prominently, \cite{pudlak1997lower}
derived sub-exponential lower bounds for CG-CP for the same ILP
used below by showing that CG-CP admits
real feasible monotone interpolation and a similar result for the
resolution proof system. Dash showed an analogous result for the
cutting plane proof system using lift-and-project cuts~\cite{dash2005exponential} and later
for split cuts~\cite{dash2010complexity}, which generalize both Chv\'atal-Gomory and lift-and-project cuts.
The concept of feasible interpolation and
the method in which it is used to derive lower bounds has been
developed in the sequence of
papers~\cite{krajicek1994,razborov1995unprovability,krajicek1997,BPR1997,pudlak1997lower}.

Note however, that feasible monotone interpolation can only be used to obtain lower bounds
for problems in a very specific form. This limitation has recently been addressed
independently in~\cite{hrubevs2017random} and~\cite{fleming2022}, where it is shown
that random $\Theta(\log(n))$-CNFs are hard for CG-CP.
To this end, \cite{hrubevs2017random} introduced the concept of
infeasibility certificates which are very closely related to
the notion of feasible interpolation. We mimic their approach
to lift real monotone circuit lower bounds for
infeasibility certificates for random CNFs to establish that $\Theta(\log(n))$-CNFs
require branch-and-bound trees with general disjunctions of size at least
$\smash{2^{n^{\Omega(1)}}}$ with high probability as well.

The rest of this paper is structured as follows:
We first survey some necessary preliminaries in Section~\ref{sec:prelim}.
Then we explicitly state our results in Section~\ref{sec:results}
and describe in which way they are related. The proofs are then given in
Section~\ref{sec:proofs}.
  
\section{Preliminaries}
\label{sec:prelim}

For polyhedra~$P \subseteq \R^{n_1}$ and~$Q \subseteq \R^{n_2}$, let~$P
\times Q = \{\binom{x}{y} \in \R^{n_1+n_2}\,|\, x \in P,\; y \in Q\}$ denote their Cartesian product.
For $n \in \N$, we use $[n] \define \{1, \dots, n\}$.

\paragraph{Systems of Linear Inequalities}
Let~$Ax\leq b$ with~$A\in \Q^{m\times n}$ and~$b\in \Q^m$ be a system of
linear inequalities.  By scaling, we can assume that~$A$ and~$b$ have
integral entries. If a polyhedron~$P$ is described by $Ax\leq b$ we
write $\{Ax\leq b\}\coloneqq\{x~|~Ax\leq b\} = P$ for brevity.

We say~$Ax\leq b$ is \emph{integer-feasible}, if there is a
point~$\hat x\in \Z^n$ with~$A \hat x\leq b$ and \emph{integer-infeasible}
otherwise. The polyhedron $\{Ax \leq b\}$ is \emph{integer-free} if
$Ax \leq b$ is integer-infeasible. Similarly, $Ax\leq b$ is
\emph{LP-feasible}, if there is a point~$\hat x\in \Q^n$
with~$A \hat x\leq b$ and \emph{LP-infeasible} otherwise.
  
A \emph{Farkas-certificate (of infeasibility)}
for the system $Ax \leq b$ is a vector~$f\in \Z^m_+$, such that~$f^\top A = 0$
and~$f^\top b <0$. It is well known that a linear system~$Ax\leq b$ is
LP-infeasible if and only if it admits a Farkas-certificate of
infeasibility. Note that the restriction to integral~$f$ is without
loss of generality. 
  
  \paragraph{Branch-and-Bound Trees}

  To fix notation, we formalize branch-and-bound trees.  A
  \emph{disjunction} is a pair of linear inequalities of the form
  $(\alpha^\top x\leq \delta,\; \alpha^\top x\geq \delta +1)$, where~$\alpha\in \Z^n$
  and~$\delta \in\Z$, which we denote
  $\alpha^\top x\leq \delta \lor \alpha^\top x\geq \delta +1$. Note that every integer
  point satisfies exactly one of them. A \emph{branch-and-bound proof
    (of integer-infeasibility)} or \emph{branch-and-bound tree} for an
  system of linear inequalities~$(P)$ is a rooted binary directed
  tree~$T$ with the following properties:
\begin{enumerate}
\item \label{prop:labeling} For every non-leaf node~$N$ there is a disjunction
  $\alpha^\top x\leq \delta \lor \alpha^\top x\geq \delta +1$, such that the \emph{left}
  outgoing edge of~$N$ is labeled with the inequality $\alpha^\top x\leq \delta$
  and the \emph{right} edge is labeled with $\alpha^\top x\geq \beta +1$. The
  neighbor~$N_\leq$ of~$N$ incident to the left edge is called the
  \emph{$(\alpha^\top x\leq \delta)$-child} of~$N$, whereas the neighbor~$N_\geq$
  incident to the right edge is the
  \emph{$(\alpha^\top x\geq \delta +1)$-child}.  The
  \emph{$(\alpha^\top x\leq \delta)$-branch} at a node~$N$ is the directed
  subtree~$T(N_\leq)$ rooted at~$N_\leq$ and the \emph{$(\alpha^\top x\geq \delta +1)$-branch}
  is the subtree~$T(N_\geq)$ rooted at $N_\geq$.

\item \label{prop:validity} For a node~$N$ in~$T$, the
  problem~$T_N(P)$ \emph{associated to~$N$ in a branch-and-bound
    tree~$T$ for~$(P)$} is (the LP-relaxation) of~$(P)$ and all constraints
  occurring as edge labels on the unique path from the root to~$N$
  in~$T$.
  We say~$N$
  is \emph{feasible} if~$T_N(P)$ is LP-feasible and \emph{infeasible}
  otherwise. We require~$N$ to be infeasible for every leaf~$N$ of~$T$.
\end{enumerate}
A \emph{branch-and-bound tree for an integer-free
  polyhedron~$P$} is
a branch-and-bound tree for a system~$Ax\leq b$ with~$P = \{Ax\leq b\}$.

We emphasize that we do not require non-leaf nodes of~$T$ to be feasible.
This is convenient, since then a branch-and-bound tree~$T$ for a polyhedron
$P$ is also a branch-and-bound tree for every polyhedron~$Q\subseteq P$,
see, e.g., Dey et al.~\cite{dey2022lower}.
Moreover, this does not alter the minimal size of a
branch-and-bound tree for any infeasible integer problem.

A tree labeled as described in~Property~\ref{prop:labeling}
which does not necessarily satisfy
Property~\ref{prop:validity} will be called \emph{not necessarily valid
  branch-and-bound tree}. For emphasis, we sometimes call trees that
satisfy both properties \emph{valid}. Note that with above definition
only integer-infeasible systems of linear inequalities have
valid branch-and-bound trees.

Since we consider only binary trees, the number of nodes of a tree will be asymptotically twice the number of its leaves. Hence, we may define the \emph{size~$\card T$ of~$T$} to denote the number of leaves of~$T$, which turns out to be slightly more convenient.
 We let~$\SBB(P)$ be the smallest size of a branch-and-bound tree using general disjunctions proving integer-freeness of~$P$.
For~$P$ containing integral points, we define~$\SBB(P) \coloneqq +\infty$.

An important result about branch-and-bound trees is that they can be
recompiled to reduce the encoding length of the coefficients used in
the disjunctions at the nodes.
This is stated in the following Theorem by Dadush and Tiwari~\cite{dadush2020complexity}:

\begin{theorem}{(Theorem 1 in~\cite{dadush2020complexity} and its proof)}\label{thm:coeff_size}
  Let~$P \subseteq \R^n$  be an integer-free polytope contained in the ball~$B^n_1(R) \coloneqq \{x\in \R^n ~|~ \lVert x \rVert_1 \leq R\}$ with radius~$R \in \N$ with respect to the~$\ell_1$-norm. Let~$T$ be a branch-and-bound tree showing the integer-freeness of~$P$. Then, there exists a branch-and-bound tree~$T'$ for~$P$, such that~$\card{T'} \leq (4n+5) \card  T$, and for every disjunction~$\alpha^\top x \leq \delta \lor \alpha^\top x \geq \delta+1$ which~$T'$ branches on
  we have
  $\max \{\lVert \alpha \rVert_\infty,\card{\delta}\} \leq (10nR)^{(n+2)^2}$.
\end{theorem}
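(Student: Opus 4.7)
The plan is to recompile $T$ into $T'$ by inductively processing nodes top-down, replacing each disjunction whose coefficients are oversized by a short chain of disjunctions with small coefficients. The enabling observation is that at every internal node $N$ of $T$, the polytope $P_N$ (the intersection of $P$ with all edge labels along the root-to-$N$ path) is integer-free and contained in $B_1^n(R)$. Indeed, the subtree of $T$ rooted at $N$ is itself a valid branch-and-bound proof of integer-infeasibility for $P_N$, since any integer point of $P_N$ would have to satisfy exactly one chain of branching conditions and thus land in a leaf whose LP-relaxation is nonempty, contradicting leaf-infeasibility.

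The central ingredient is an \emph{effective} version of Khinchine's flatness theorem, provable via LLL basis reduction applied to a lattice derived from an approximating John ellipsoid of the polytope: for any integer-free polytope $Q \subseteq B_1^n(R)$, there exists a direction $\alpha' \in \Z^n \setminus \{0\}$ in which $Q$ has lattice width at most some $w(n)$ depending only on $n$, and such $\alpha'$ can be chosen with $\lVert \alpha' \rVert_\infty \leq (10nR)^{(n+2)^2}$. Moreover, since $Q \subseteq B_1^n(R)$, the integers $\delta_1 \leq \delta_2$ delimiting the flat slab $\{\delta_1 \leq {\alpha'}^\top x \leq \delta_2\} \supseteq Q$ satisfy $\lvert \delta_1 \rvert, \lvert \delta_2 \rvert \leq \lVert \alpha' \rVert_\infty R$, also within the target bound.

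Using this, for each node $N$ whose original disjunction has coefficients exceeding $(10nR)^{(n+2)^2}$, I would apply the effective flatness result to $P_N$ to obtain small-coefficient $\alpha'$ and integers $\delta_1, \delta_2$ as above, and replace the subtree at $N$ by a chain of small-coefficient branchings ${\alpha'}^\top x \leq k \lor {\alpha'}^\top x \geq k+1$ for $k$ ranging over $\{\delta_1 - 1, \delta_1, \ldots, \delta_2\}$. The two extreme branches produce LP-infeasible leaves (since $P_N$ is tightly contained in the width-$w(n)$ slab), while the at most $w(n)$ inner branches further restrict $P_N$ to a slab of $\alpha'$-width one, which has strictly smaller effective dimension. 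By recursing inside each such slab and reusing the branchings of the original subtree whose coefficients are already small, the total size blowup amortizes to a factor of $(4n+5)$.

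The main obstacle is twofold. First, establishing the effective Khinchine bound in the precise form $\lVert \alpha' \rVert_\infty \leq (10nR)^{(n+2)^2}$ requires a careful LLL-based analysis tracking coefficient growth through up to $n+1$ basis-reduction steps on lattices whose initial vectors have norms bounded polynomially in $R$. Second, arguing that the amortized blowup is only $(4n+5)$ rather than exponential in $n$ relies crucially on the fact that each flatness replacement strictly decreases the effective dimension, so at most $n$ nested layers of replacement affect any single leaf, and on carefully reusing the original tree's structure rather than generating new subtrees from scratch at each level.
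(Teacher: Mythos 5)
The theorem you are asked to prove is not proved in this paper at all: it is imported verbatim from Dadush and Tiwari~\cite{dadush2020complexity}, so the relevant comparison is with their argument. Their recompilation keeps the original tree intact and, at each node, \emph{simulates} the oversized disjunction $\alpha^\top x\leq\delta\lor\alpha^\top x\geq\delta+1$ by a small gadget of bounded-coefficient disjunctions obtained by Diophantine approximation of $\alpha$ over the ball $B_1^n(R)$: every atom of the gadget, intersected with the ball, is contained in one side of the original split (or is handled trivially), so the two original subtrees can be re-attached below the gadget and all leaves remain LP-infeasible. This is what makes the blow-up a factor independent of $\card{T}$, namely $4n+5$ per node.

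Your proposal takes a genuinely different route -- apply an effective flatness theorem to the node polytope $P_N$ and branch along a flat direction -- and as sketched it has a real gap: it cannot deliver the bound $\card{T'}\leq(4n+5)\card{T}$. Once you discard the disjunction used at $N$ and branch instead on a flat direction of $P_N$, the resulting regions (hyperplane sections $\{\alpha'^\top x=k\}$, not ``width-one slabs'') are in general not contained in either halfspace $\{\alpha^\top x\leq\delta\}$ or $\{\alpha^\top x\geq\delta+1\}$, so the original subtrees of $N$ -- whose leaf LP-infeasibility relies on exactly those discarded constraints -- cannot be ``reused''; you must rebuild a proof from scratch inside each of the up to $w(n)+2$ branches, recursing over at most $n$ dimensions. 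This is Lenstra's algorithm, and it produces a replacement subtree of size roughly $\prod_d (w(d)+2)=2^{\Theta(n\log n)}$, independent of the size of the original subtree; the total bound is then at best $\card{T}\cdot 2^{O(n\log n)}$ (or simply $2^{O(n\log n)}$), not $(4n+5)\card{T}$. The claimed amortization ``at most $n$ nested layers, each reusing the original structure'' is precisely the step that is not justified and, for this construction, appears false. Note also that the linear factor is not cosmetic: in the application (Theorem~\ref{thm:qfeas_real_monotone_interpolation} and the lower bounds derived from it), a $2^{\Theta(n\log n)}$ overhead would swamp the monotone-circuit lower bound of $2^{\Omega(n^{1/6-\epsilon})}$ and the argument would collapse. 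The flatness machinery is the right tool for bounding the \emph{coefficients} (and indeed lattice/Diophantine arguments give the $(10nR)^{(n+2)^2}$ bound), but the size bound requires simulating each original split in place rather than replacing the subtree below it.
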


A \emph{certified branch-and-bound tree (for a system of
inequalities/polyhedron)}~$T$ is a branch-and-bound tree (for a system of
inequalities/polyhedron), where attached to
every leaf~$L$ is a Farkas-certificate~$f^L$ of infeasibility for the
problem associated to~$L$.

  \paragraph{Monotone Real Circuits}

  A \emph{monotone real circuit}~$C$ is an acyclic directed graph whose vertices are called \emph{gates}, such that every gate has either zero or two incoming edges and the incoming edges at every gate are ordered. The number of incoming edges of a gate is called its \emph{fan-in}. If gate~$g$ has fan-in zero, it is called an \emph{input gate} and is labeled with a variable~$x_i$ and if~$g$ has fan-in two, then~$g$ is labeled by a non-decreasing function~$f_g\colon\R^2\rightarrow \R$, which is the \emph{function applied at~$g$}. If~$x= (x_1,\dots,x_k)$ are the variables occurring as labels of input gates, we define (slightly abusing notation) the function~$g\colon \R^k \rightarrow \R$ \emph{computed by~$g$} inductively (along a topological order of the underlying graph) to be~$g(x) = x_j$, if~$g$ is an input gate labeled by~$x_j$ and
  by~$f_g(g_1(x),g_2(x))$, if~$g$ is a non-input gate and~$g_1$ and~$g_2$ are the first and second predecessors of~$g$. Finally, there is a designated output gate~$h$, and the value~$C(x)$ computed by~$C$ on input~$x$ is~$h(x)$. The \emph{size}~$\card C$ of a circuit~$C$ is the number of its gates.

  Note that bounded fan-in is essential, since monotone real circuits
  with unbounded fan-in of linear size can compute arbitrary monotone
  functions (consider the circuit with a unique non-input gate
  connected to all inputs). Moreover, the term `real monotone'
  circuit is slightly misleading: the arithmetical structure of the
  real numbers $\R$ does not play a role in above definition -- instead
  $\R$ merely plays the role of a sufficiently large linearly ordered
  domain (this has already been mentioned in~\cite{pudlak1997lower}).

  A circuit~$C$ \emph{decides} (the membership problem for) a
  set~$X \subseteq \R^k$, if~$C(x) = 1$ for~$x\in X$ and~$C(x) = 0$
  otherwise.  Similarly, $C$ separates two sets~$Z_1\subseteq \R^k$
  and~$Z_2\subseteq \R^k$, if~$C(z) = 1$ for all~$z\in Z_1$
  and~$C(z) = 0$ for all~$z\in Z_2$ or vice versa.  While modifying a
  given circuit, \emph{post-composing} the
  function~$f_g$ applied at a gate~$g$ with a
  function~$\varphi\colon \R \rightarrow \R$ means replacing~$f_g$
  by~$\varphi \circ f_g$ , where~$\circ$ denotes
  the composition of functions. Similarly, \emph{pre-composing the first [second] input}
  means replacing the function $f_g\colon x,y\mapsto f_g(x,y)$ by
  $x,y \mapsto f_g(\varphi(x),y)$ [${x,y \mapsto f_g(x,\varphi(y))}$].
  The notion of monotone real circuits was introduced in~\cite{pudlak1997lower}.

\paragraph{Interpolation}

We briefly translate the notion of interpolation into the language of linear integer
programs. For this, we consider integer-infeasible linear inequality systems of the following form:
\begin{equation}\label{eq_interpol_templ}
    \begin{pmatrix}A\\0\end{pmatrix}x +
    \begin{pmatrix}0\\B\end{pmatrix}y +
    \begin{pmatrix}C\\D\end{pmatrix}z \leq \begin{pmatrix}a\\b\end{pmatrix},\qquad
    x,\, y,\, z \in \{0,1\}^{n_1+n_2+n_3},
\end{equation}
where $C \geq 0$ and~$D\leq 0$ (entry-wise). Moreover, we let
$A \in \Z^{m_1 \times n_1}$, $B \in \Z^{m_2 \times n_2}$,
$C \in \Z^{m_1 \times n_3}$, $D \in \Z^{m_2 \times n_3}$,
$a \in \Z^{m_1}$, $b \in \Z^{m_2}$ and~$n \define n_1+n_2+n_3$.

Since \eqref{eq_interpol_templ} is integer-infeasible, at least one of the
systems~$Ax \leq a-Cz,\; x\in \{0,1\}^{n_1}$ or
$By \leq b-Dz,\; y\in \{0,1\}^{n_2}$ is infeasible for every fixed
$z \in \{0,1\}^{n_3}$. An \emph{interpolant} for~\eqref{eq_interpol_templ}
is a binary function~$I\colon \{0,1\}^{n_3} \rightarrow \{0,1\}$, such
that~$I(z) =1$ implies that the first system is infeasible and~$I(z) = 0$
implies that the second is infeasible.
Note that there is a interpolant for every integer-infeasible system of the form~\eqref{eq_interpol_templ}. We say a proof system~$\S$
\emph{admits (monotone/real monotone) [quasi-]feasible interpolation}, if
for any proof of infeasibility of~\eqref{eq_interpol_templ} in~$\S$ of
size~$S$ there exists a (monotone/real monotone) circuit of size
[quasi-]polynomial in~$S$ and the encoding size
of~(\ref{eq_interpol_templ}) computing such an interpolant.  Then, if every
such monotone circuit must be large, also any proof
for~(\ref{eq_interpol_templ}) in~$\S$ must be large. This is the case for the
examples described in the next paragraph.

\paragraph{The Clique-Coloring Pair and the Broken-Mosquito-Screen Pair}

The following construction is based on the observation that no $r$-vertex graph $G$ simultaneously admits both a $(k-1)$-coloring and a $k$-clique. This is expressed by the integer-infeasibility of:
\begin{subequations}\label{eq:BMS_alternative}
\begin{align}
  x_{i\ell}+x_{j\ell} &\leq 2 - z_{ij}&&\forall\, \{i,j\} \in \binom{[r]}{2},\; \forall\, \ell\in[k-1], \\
  \sum_{\ell\in[k-1]}x_{i\ell} &\geq 1 && \forall\, i\in[r], \\
  y_{i}+y_{j} &\leq 1 + z_{ij} && \forall\, \{i,j\} \in \binom{[r]}{2}, \\
  \sum_{i\in[r]} y_{i} &\geq k, &&  \\
  x \in [0,1]^{r \times (k-1)},\; & y \in [0,1]^{r},\; z \in [0,1]^{{[r]\choose 2}}, &&
\end{align}
\end{subequations}
where we interpret~$z\in \{0,1\}^{[r] \choose 2}$ as an encoding of the graph~$G = ([r],E)$, with~$z_{ij} = 1$ if and only if~$\{i,j\}\in E$, $x \in \{0,1\}^{r \times (k-1)}$ as a $(k-1)$-coloring of $G$, with $x_{i\ell} = 1$ if and only if vertex~$i$ gets assigned color $\ell$, and~$y\in \{0,1\}^{r}$ as an~$k$-clique of~$G$, with $y_{i} =1$ if and only the clique contains vertex $i$.
  
If we write~\eqref{eq:BMS_alternative} in the form of~\eqref{eq_interpol_templ}, the pair of sets
\begin{align*}
  & Z_1 \define \{z \in \{0,1\}^{[r]\choose 2}~|~ \exists x \in \{0,1\}^{r\times (k-1)}\colon Ax \leq a- Cz\} \text{ and }\\
  & Z_2 \coloneqq\{z \in \{0,1\}^{[r]\choose 2}~|~ \exists y \in \{0,1\}^{r}\colon By \leq b- Dz\}
\end{align*}
is known as the \emph{clique-coloring pair} or \emph{CC-pair}.
Note that $n_1 = r(k-1)$, $n_2 = r$, and $n_3 = {r \choose 2} = (r^2-r)/2$ for later calculations.
As already remarked, we have $Z_1 \cap Z_2 = \emptyset$.

Pudl\'ak~\cite{pudlak1997lower} gave the following lower-bound for any monotone real circuit  separating the CC-pair:

\begin{theorem}\label{thm:mon_circ_lb_BMS}
  Every family of monotone real circuits separating the CC-pair with $r$ vertices and $k \define \lfloor \frac 1 8 (r /\log r )^{2/3}\rfloor$ has size~$2^{\Omega((r/\log r )^{1/3})}$.
\end{theorem}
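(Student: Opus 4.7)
This is Pudl\'ak's original separation lower bound for the clique-coloring pair, and I would prove it by invoking the extension of Razborov's monotone approximation method to real monotone circuits from that paper. The approach has three components. Fix two test distributions on inputs $z \in \{0,1\}^{\binom{[r]}{2}}$: a \emph{positive} distribution $\mathcal{D}_+$ uniform on the edge-sets of cliques $K_S$ for $S \in \binom{[r]}{k}$ (whose images lie in $Z_2$ and must be accepted by any separating circuit), and a \emph{negative} distribution $\mathcal{D}_-$ uniform on the complete $(k-1)$-partite graphs induced by a uniformly random partition of $[r]$ into $k-1$ colour classes (whose images lie in $Z_1$ and must be rejected). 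The approximator class consists of the indicators $A_{\mathcal{F}}(z) = 1$ iff $z$ contains $K_S$ for some $S \in \mathcal{F}$, ranging over families $\mathcal{F} \subseteq \binom{[r]}{\le \ell}$ with $|\mathcal{F}| \le M$, for parameters $\ell, M$ to be fixed later.

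The core is a gate-replacement lemma: any non-decreasing $f\colon \R^2 \to \R$ applied to two $\{0,1\}$-valued approximators $A_{\mathcal{F}_1}, A_{\mathcal{F}_2}$ takes only four possible output values, and choosing a suitable threshold collapses the gate to one of the six monotone Boolean functions on two variables. The naive combined family of cliques is then pruned via a sunflower-lemma-style plucking procedure back down to a family of size at most $M$, at a controlled cost in additional mis-classifications under $\mathcal{D}_+$ and $\mathcal{D}_-$. Iterating gate-by-gate along a topological order through a size-$s$ real monotone circuit separating the CC-pair yields a single approximator $A_{\mathcal{F}}$ whose total error on $\mathcal{D}_+ \cup \mathcal{D}_-$ grows only linearly in $s$.

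A separate combinatorial step then shows that no single $A_{\mathcal{F}}$ can be accurate on both distributions: if $\mathcal{F}$ covers enough $k$-subsets to recognise most $K_S$, then one of its members is monochromatically embedded in a random $(k-1)$-partition with constant probability, producing errors on $\mathcal{D}_-$; otherwise most $K_S$ evade $\mathcal{F}$, producing errors on $\mathcal{D}_+$. Calibrating $k = \lfloor \tfrac{1}{8}(r/\log r)^{2/3}\rfloor$ and $\ell \approx (r/\log r)^{1/3}$ then forces $s \ge 2^{\Omega((r/\log r)^{1/3})}$. The main obstacle, and the only place where the proof diverges from the purely Boolean Razborov argument, is the replacement lemma in the real-monotone setting: the gate functions are arbitrary non-decreasing real maps rather than AND/OR gates, and one must establish that once the approximators feeding into a gate are Boolean-valued on the test inputs, the gate output collapses to a Boolean monotone function of its inputs via an appropriately chosen threshold so that the sunflower plucking machinery still applies. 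This "collapse on tests", together with the bookkeeping of the plucking errors, is where all the work lies.
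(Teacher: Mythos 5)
This theorem is not proved in the paper at all: it is imported verbatim from Pudl\'ak~\cite{pudlak1997lower} (the remark after Theorem~\ref{thm:bb_lb_BMS} even points to that proof, building on~\cite{alon1987monotone} as presented in~\cite{wegener1987complexity}). Your proposal is therefore an outline of exactly the argument the paper relies on -- the Alon--Boppana/Razborov approximation method with cliques as positive tests, complete $(k-1)$-partite graphs as negative tests, clique-family approximators, sunflower plucking, and the calibration $k=\lfloor\tfrac18(r/\log r)^{2/3}\rfloor$, $\ell\approx(r/\log r)^{1/3}$. All of that part of the sketch is standard and fine.

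The gap is in the step you yourself identify as the core, and as you describe it the step would not go through. You say that because the children's approximators are $\{0,1\}$-valued, a nondecreasing gate $f$ takes at most four values on them, so a threshold collapses the gate to one of the six monotone Boolean functions, after which plucking applies and ``total error grows linearly in $s$.'' The collapse itself is trivial; the problem is that it is applied to the \emph{approximators}, whereas the quantity you ultimately compare against is the \emph{original circuit's} accept/reject behaviour, which is computed through genuinely real intermediate values. In the Boolean argument the telescoping ``if no plucking error occurred on a test, the output approximator equals the circuit output'' works because the exact gate operation applied to correct children values reproduces the true gate value; once you replace each real gate by some thresholding of $f$ on two bits, this link to the true computation is severed, and nothing in your bookkeeping bounds the discrepancy introduced by the thresholds themselves. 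Indeed, if a gate-by-gate Booleanization agreeing with the real circuit on an arbitrary finite test set were always possible, monotone real circuits could never beat monotone Boolean circuits on separation tasks, contradicting Rosenbloom's linear-size monotone real circuits for arbitrary slice functions; it would also make the real-gate case a trivial corollary of Alon--Boppana, which it is not. What is missing is the additional invariant that Pudl\'ak maintains through the induction: each gate's Boolean approximator must be tied to the \emph{true real value} of that gate on the test inputs via thresholds carried along the induction (propagated using monotonicity of $f$, with the choice among the six Boolean gate types dictated by comparisons of the four values of $f$ at the children's thresholds), so that the per-gate errors are errors relative to the real computation and the final approximator provably misclassifies only $O(s)$ times the plucking loss. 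Without this linking invariant, the ``errors add up linearly in $s$'' claim is unjustified, and the contradiction with the combinatorial lemma does not follow.
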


Note that $\log$ always refers to the logarithm with respect to base $2$ in
this paper.

Since $n_3 = (r^2-r)/2$ is the number of inputs to a circuit separating the CC-pair, we note
\[
  2^{\Omega((r/\log r )^{1/3})} = 2^{\Omega((n_3/\log n_3)^{1/6})} \in 2^{\Omega( n_3^{1/6 -\epsilon})} \text{ for every }\epsilon >0.
\]

Theorem~\ref{thm:mon_circ_lb_BMS} is in contrast to the fact that the CC-Pair can be separated in polynomial time by semi-definite programming using Lov\'asz' Theta body (cf., e.g., Remark 9.3.20(b) in~\cite{grotschel1988geometric}).

Another example of a disjoint pair of languages which requires a large monotone real circuit to be separated is the \emph{broken-mosquito-screen pair} or \emph{BMS-pair}.
The BMS-pair is polynomially equivalent to the CC-pair~\cite{pudlak2003reducibility} and is therefore also polynomial time separable. For the BMS-pair, a slightly more explicit bound is available in the literature:
Cook and Haken~\cite{haken1999exponential} show that any circuit separating the BMS-pair has at least~\smash{$2^{K n_3^{1/8}}$} gates for some~$K> 0.32$.

\paragraph{Random CNFs and Infeasibility Certificates}
In this section we will consider Boolean variables $x_1,\dots,x_n$.
A \emph{literal} is a variable $x_i$ or its negation $\lnot x_i$. A \emph{clause} is a
set of literals. A \emph{CNF} (or formula in \emph{conjunctive normal
  form}) is a set of clauses. A \emph{$k$-clause} is a clause with $k$ variables and
a \emph{$k$-CNF} is a CNF containing only $k$-clauses. The \emph{satisfiability problem} is the
problem of deciding whether there exists a satisfying assignment to a
given CNF $\C$, i.e., an assignment $\alpha$ of binary values (or truth values)
to $x_1,\dots,x_n$, such that every clause $C\in\C$ contains either 
a literal $x_i$ for a variable which is assigned to be true (i.e., $\alpha(x_i) =1$)
or a literal $\lnot x_i$ for a variable which is assigned to be false (i.e., $\alpha(x_i) =0$).

The satisfiability problem can be cast as an ILP in a straight-forward way:
\begin{equation}\label{eq:CNF_ILP}
   \sum_{x_i\in C} x_i +\sum_{\lnot x_i\in C}(1-x_i) \geq 1    \quad\forall C \in \C,\qquad
   x_1,\dots,x_n \in \{0,1\}.
 \end{equation}
Thus we can speak about branch-and-bound trees refuting unsatisfiable CNFs.

A random $k$-CNF $\C$ in $n$ variables and $m$ clauses is obtained by
picking $k$-clauses uniformly and independently at random from the
${n \choose k}2^k$ possible $k$-clauses. Since we allow repetition,
$\C$ may contain less than $m$ clauses.  We are interested in choosing
the parameters $m$ and $k$ in a way, such that the resulting CNFs are
unsatisfiable, but hard to refute with high probability.  Moreover, we
are interested in choosing these parameters as small as possible.  For
this we follow the choices made in~\cite{hrubevs2017random}, where this is
discussed in more detail.  First observe
that any random assignment to the variables satisfies $\C$ with
probability $(1-2^{-k})^m$. Hence, by the union bound there is a
satisfying assignment with probability at
most~$\smash{(1-2^{-k})^m2^n\leq e^{-2^{-k}m}2^n}$. Hence, if we choose
$m \geq (\ln 2 +\varepsilon)2^kn$ for some $\varepsilon > 0$, then a
random-formula is unsatisfiable with high probability, where $\ln$
refers to the natural logarithm. Note that if
$k \in O(\log(n))$, then the number of clauses is polynomial in~$n$.

Let $\C$ denote a CNF and $X_0 \cup X_1 = X \coloneqq \{x_1,\dots,x_n\}$ denote a partition
of its variables.
Every clause $C_i$ in $\C$ can be written as as $C_i = C_i^0\cup C_i^1$, where $C_i^j$ contains
only variables from $X_j$ and their negations.
A monotone Boolean function $F:\{0,1\}^m \rightarrow \{0,1\}$ is an~\emph{{$(X_0,X_1)$}-certificate (of infeasibility)} for $\C$,
if for every $A \subseteq [m]$ we have
\begin{align*}
  F(A) = 0 &\implies \text{$\{C_i^1\colon i\in [m]\setminus A\}$ is unsatisfiable},\\
  F(A) = 1 &\implies \text{$\{C_i^0\colon i\in A\}$ is unsatisfiable},
\end{align*}
where we identify a set of (indices of) clauses with its characteristic vector.
That is, given a subset $A$ of the clauses of $\C$ as input, $F$
determines one of the CNFs $\{C_i^1\colon i\in [m]\setminus A\}$ or
$\{C_i^0\colon i\in A\}$ which is unsatisfiable. It is easy to see
that, for any choice of $X_0$ and $X_1$, $\C$ admits an $(X_0,
X_1)$-certificate if and only if it is unsatisfiable (Proposition~5
in~\cite{hrubevs2017random}).

Similarly to interpolants, infeasibility certificates have high
monotone real circuit complexity for some problems, among them the
interesting case of random CNFs. We say that a sequence of
events~$(E(n))_{n\in \N}$
 \emph{holds with high probability} if
$\lim_{n \rightarrow \infty} \Prob[E(n)] = 1$.
We then have:

\begin{theorem}[Theorem~2 in~\cite{hrubevs2017random}]\label{thm:CNF_cert_compl}
  Let $c > 1$ be a constant and let $n \geq 1$ be given. Let
  $X_0 \cup X_1$ be a partition of $2n$ variables into two sets of
  equal size. If $\C$ is a random $k$-CNF with $O(n2^k)$ clauses,
  variables $X_0 \cup X_1$, and $k \geq c \log (n)$, then every
  $(X_0,X_1)$-certificate for $\C$ requires monotone real circuits of size
  $\smash{2^{n^{\Omega(1)}}}$ with high probability.
\end{theorem}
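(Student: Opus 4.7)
The plan is to reformulate the certificate problem as a monotone separation problem and then attack it with the approximation method for monotone real circuits developed by Pudl\'ak, using the random structure of $\C$ to preclude small approximators. First, I would observe that any $(X_0,X_1)$-certificate $F$ is a monotone Boolean function that must separate
\begin{align*}
U_0 &\coloneqq \{A \subseteq [m] \suchthat \{C_i^0\colon i\in A\} \text{ is unsatisfiable}\}, \\
U_1 &\coloneqq \{A \subseteq [m] \suchthat \{C_i^1\colon i\in [m]\setminus A\} \text{ is unsatisfiable}\},
\end{align*}
in the sense that $F(A)=1$ on $U_0\setminus U_1$ and $F(A)=0$ on $U_1\setminus U_0$. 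Since adding clauses preserves unsatisfiability, $U_0$ is upward-closed and $U_1$ is downward-closed; upward-closedness of $U_0$ rules out the only violation of monotonicity, so a monotone separator always exists and a monotone real circuit lower bound for separating $(U_0,U_1)$ directly yields the theorem.

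Second, to establish the lower bound, I would proceed by Pudl\'ak's adaptation of Razborov's approximation method to monotone real circuits. One designs a family of ``approximators'' parameterized by small combinatorial witnesses (e.g., small unsatisfiable sub-formulas or small witnesses of satisfiability on each partition half), bounds the error introduced when replacing each gate of a hypothetically small circuit by its closest approximator, and contrasts this with an unavoidable total error incurred by any constant-size approximator against a suitable hard distribution on inputs. The randomness of $\C$ enters here: with high probability the clause-variable incidence hypergraph of a random $k$-CNF with $k \geq c\log n$ and $O(n2^k)$ clauses is a near-optimal expander, and expansion forces any constant-size approximator to err on a positive fraction of inputs of the hard distribution. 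The final step combines the bounded per-gate error with the unavoidable total error to force the number of gates to be $2^{n^{\Omega(1)}}$.

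The main obstacle is the probabilistic analysis underlying the approximation argument. One must simultaneously (i) choose a class of approximators expressive enough that each monotone gate (including arbitrary non-decreasing real-valued gates) can be approximated within the class at bounded cost, (ii) exhibit a distribution on inputs supported on $U_0\setminus U_1$ and $U_1\setminus U_0$ on which every constant-size approximator errs with probability $\Omega(1)$, and (iii) show that both properties hold \emph{simultaneously} with high probability over the random CNF. The parameter regime $k = \Theta(\log n)$ with $O(n 2^k) = n^{O(1)}$ clauses is critical here: it guarantees unsatisfiability (as derived in the preliminaries) and provides the expansion needed to defeat small approximators, while keeping the instance polynomial in~$n$ so that the resulting bound $2^{n^{\Omega(1)}}$ is meaningful. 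An alternative route via a direct reduction from the CC- or BMS-pair (Theorem~\ref{thm:mon_circ_lb_BMS}) faces the same combinatorial difficulty packaged as an embedding problem, namely showing that a random CNF in this regime hosts a hard clique-coloring-like structure with high probability.
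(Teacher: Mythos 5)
This statement is not proved in the paper at all: it is imported verbatim as Theorem~2 of Hrube\v{s} and Pudl\'ak~\cite{hrubevs2017random}, and the paper's role for it is purely that of a black box to be combined with Lemma~\ref{lem:inf_cert_interpol}. So the relevant comparison is against the cited proof, and measured against that, your proposal has a genuine gap: everything after your (correct) first step is a description of a method rather than a proof. You never specify the class of approximators, never show that an arbitrary non-decreasing real-valued gate can be replaced by an approximator at bounded error (this is exactly the point where Pudl\'ak's extension of Razborov's method to \emph{real} circuits is delicate), never exhibit the hard distribution on sets of clauses $A$, and never carry out the high-probability analysis showing that a random $k$-CNF with $k\geq c\log n$ and $O(n2^k)$ clauses supports such a distribution. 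Saying that ``expansion forces any constant-size approximator to err on a positive fraction of inputs'' is a statement of the desired conclusion, not an argument; making it precise for the pair $(U_0,U_1)$ you define \emph{is} the content of the Hrube\v{s}--Pudl\'ak theorem, which occupies the technical core of their paper. As written, the proposal would not be accepted as a proof of the statement.

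Two smaller points. First, your claim that the parameter regime ``guarantees unsatisfiability (as derived in the preliminaries)'' is not correct: with $O(n2^k)$ clauses and an unspecified constant, $\C$ need not be unsatisfiable whp, and the paper explicitly remarks after the theorem that for small $c$ the conclusion may hold vacuously (no certificates exist when $\C$ is satisfiable); the preliminaries only give unsatisfiability for $m\geq(\ln 2+\varepsilon)2^k n$. Any proof must either handle this vacuous case or not presuppose unsatisfiability. Second, your reformulation in step one is fine ($U_0$ upward closed, $U_1$ downward closed, certificates are exactly monotone separators), but this is essentially Proposition~5 of~\cite{hrubevs2017random} and carries none of the difficulty; the alternative route you mention, embedding a clique-coloring-like pair into a random CNF, is likewise only named, not executed.
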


Note that the conditions in Theorem~\ref{thm:CNF_cert_compl} do not
ensure that~$\C$ is unsatisfiable with high probability.  Hence, for
small values of $c$, the theorem states that with high
probability either every $(X_0,X_1)$-certificate
requires large real monotone circuits or~$\C$ is satisfiable (and thus there are no
$(X_0,X_1)$-certificates at all). However, for $c > \ln 2$ almost all random CNFs
as in the statement of the theorem are unsatisfiable, cf.\ the discussion above.
A similar remark is true about
most results about random CNFs in this paper.


The connection between infeasibility certificates and interpolation
is explained via the observation that
an infeasibility certificate for a CNF $\C = \{C_1,\dots,C_m\}$ can be seen as an interpolant to a
closely related CNF: Let $X_0\cup X_1$ be a partition of the variables in $\C$. We introduce additional
variables $ Y = \{y_1,\dots,y_m\}$, one for every clause, and consider
the CNF $\D$ with clauses
\[
  C_1^0\cup\{\lnot y_1\},\; \dots,\; C_m^0\cup\{\lnot y_m\},\;
  C_1^1\cup\{y_1\},\; \dots,\; C_m^0\cup\{y_m\}.
\]

Let $\D_0$ denote the CNF containing the first $m$ clauses (with variables $X_0\cup Y$)
and $\D_1$ the CNF containing the second $m$ clauses (with variables $X_1\cup Y$).
Let $Y_i$ (where $i\in\{0,1\}$) denote the collection of assignments $\alpha$ to the variables in~$Y$ which
make $\D_i$ satisfiable, if we fix a variable $y\in Y$ to $\alpha(y)$.
It is then easy to see that any interpolant separating $Y_0$ and $Y_1$
is an $(X_0,X_1)$-certificate for $\C$ and vice versa.
In particular, this observation implies that interpolation theorems
also convert short proofs into infeasibility certificates with low
monotone circuit complexity, which can be used to lift the lower bound
given by Theorem~\ref{thm:CNF_cert_compl} to a lower bound on the size of proofs,
provided $\D$ is not significantly harder to refute than $\C$ (which
typically does not seem to be the case).

\section{Results}
\label{sec:results}

The central result of this work is that branch-and-bound using general disjunctions admits quasi-feasible monotone real interpolation,
that is:

\begin{theorem}\label{thm:qfeas_real_monotone_interpolation}
  Given a branch-and-bound tree~$T$ for~\eqref{eq_interpol_templ}, there exists a monotone real circuit of size
  $50(n+1)^2\card{T}^2 \cdot [(n+2)^2\log(10n^3+3)]^{\log((4n+5)\card T)}$
 with input~$z$,
  which separates the sets $Z_1 \coloneqq \{z \in \{0,1\}^{n_3}~|~ \exists x\in \{0,1\}^{n_1}\colon Ax \leq a- Cz\}$ and $Z_2 \coloneqq \{z \in \{0,1\}^{n_3}~|~ \exists y \in \{0,1\}^{n_2}\colon By \leq b- Dz\}$.
\end{theorem}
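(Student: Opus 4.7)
The overall strategy is to build the separating circuit by a recursive traversal of $T$, combining two ingredients the paper establishes separately: the product-tree extraction lemma (from a branch-and-bound tree for $P \times Q$ one obtains a closely related tree for $P$ or for $Q$) and the fact that monotone real circuits can perform binary search efficiently.

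\textbf{Preprocessing and product structure.} First apply Theorem~\ref{thm:coeff_size} with $\ell_1$-radius $R = n$ (valid because all variables lie in $\{0,1\}^n$) to replace $T$ by a tree $T'$ with $|T'| \leq (4n+5)|T|$ and all branching coefficients of magnitude at most $(10n^2)^{(n+2)^2}$. In particular, for any disjunction $\alpha^\top w \leq \delta$ along $T'$, the quantity $\alpha^\top w$ takes integer values in a set of size at most roughly $(10n^3)^{(n+2)^2}$; its logarithm is the base $(n+2)^2\log(10n^3+3)$ appearing in the size bound, and it is precisely the space searched by the binary-search gadget below. Next, observe that for every fixed $z \in \{0,1\}^{n_3}$ the system~\eqref{eq_interpol_templ} decouples into the Cartesian product $P(z)\times Q(z)$ with $P(z) = \{Ax \leq a - Cz\}$ and $Q(z) = \{By \leq b - Dz\}$. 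Integer-infeasibility of the product forces at least one of the two factors to be integer-free, and the goal is to compute, as a monotone real circuit in $z$, which factor.

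\textbf{Recursive circuit construction.} Process $T'$ bottom-up. At each leaf, the stored Farkas multiplier $f \geq 0$ aggregates $A$-, $B$- and $z$-rows with nonnegative coefficients; splitting $f$ into its two blocks yields a threshold gate on $z$ that records which side of the product is being refuted at that leaf. At an internal node $N$ with branching $\alpha_x^\top x + \alpha_y^\top y + \alpha_z^\top z \leq \delta$ vs.\ $\geq \delta+1$, glue the two child gates together. For a pure $x$- or $y$-disjunction ($\alpha_y = 0$ or $\alpha_x = 0$) the gluing is a single monotone combinator. For a mixed disjunction, the gadget must decide, conditioned on $z$, at which integer value $\mu$ of $\alpha_y^\top y$ the side being refuted switches. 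Since $\mu$ ranges over a polynomial-size set of integers thanks to the preprocessing, this decision is precisely a monotone binary search over that set, plugged into a constant-depth block of monotone gates. Each tree node thus contributes a logarithmic-depth, polynomial-size block, and tallying the recursion yields the advertised size $50(n+1)^2|T|^2 \cdot [(n+2)^2\log(10n^3+3)]^{\log((4n+5)|T|)}$.

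\textbf{Monotonicity, correctness, and main obstacle.} Monotonicity in $z$ is built into the construction: the sign restrictions $C \geq 0$ and $D \leq 0$ make $P(\cdot)$ shrink and $Q(\cdot)$ grow as $z$ increases, so the winning side is itself monotone in $z$, and the binary-search and aggregation gates are chosen to preserve this. Correctness follows by induction along $T'$ using the product-tree extraction lemma: the gate at any node outputs the correct bit whenever the subtree below certifies that a prescribed side of the product is integer-free. The main technical obstacle is the treatment of mixed disjunctions. A naive enumeration of all polynomially many values of $\alpha_y^\top y$ blows the circuit up by a polynomial factor at every node, producing an uncontrolled exponential blowup. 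Replacing this enumeration by the monotone real binary-search gadget, and verifying at each step that the intermediate gates stay monotone and that the composed circuit really separates $Z_1$ from $Z_2$, is where the two main ingredients meet and is the crux of the proof.
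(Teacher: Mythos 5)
Your architecture matches the paper's proof (preprocess with Theorem~\ref{thm:coeff_size}, exploit that fixing $z$ decouples the system into $P(z)\times Q(z)$, and at each node of the tree search the split of the right-hand-side between the $x$-part and the $y$-part), but the heart of the argument is missing: you never establish \emph{why} a consistent assignment of split values exists precisely when $z\in Z_2$, which is the completeness direction of your circuit. Your leaf gadget ``records which side of the product is being refuted,'' but a leaf of $T'$ refutes a system containing mixed branching constraints $\alpha^\top x+\beta^\top y\le\delta$; whether the projected certificate remains valid depends on the right-hand-sides chosen at \emph{all} ancestors, and a monotone real circuit cannot test LP-infeasibility from scratch --- it can only verify explicitly supplied Farkas-certificates. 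So the induction you invoke needs a \emph{certified} version of the product-tree extraction lemma, and this is exactly where the difficulty sits: the paper's $C_x^2\times C_y^2$ example shows that the naive certified statement is false --- no choice of right-hand-sides makes all projected Farkas-certificates valid simultaneously --- which forces the relaxation to quasi-Farkas-certificates relative to the box $[0,1]^{n_1}$ (leaves lying below a branching constraint that misses the box are exempt from certificate validity), proved in Lemma~\ref{lem:conformal_cert_interpolation} by the $\mu_\leq,\mu_\geq$ crossing argument. Your construction neither proves such a lemma nor implements the exemption mechanism in the circuit (the paper does the latter by adding a large constant to the threshold sum when $\gamma_M^{\pm}\ge L^M$), so acceptance on all of $Z_2$ is unproven.

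Two further gaps. First, monotonicity: observing that $C\ge 0$, $D\le 0$ make the \emph{target} interpolant monotone in $z$ does not make your \emph{circuit} monotone; the search variables enter the leaf threshold checks with both orientations (constraints $\alpha_M^\top x\le\gamma_M$ and $\alpha_M^\top x\ge\gamma_M+1$ both occur with nonnegative multipliers), which the paper resolves by the two-sided parametrization $\gamma_M=L^M_{\min}+\gamma^+_M=L^M_{\max}-\gamma^-_M$ with the coupling $\gamma^+_M+\gamma^-_M=L^M$ enforced through Corollary~\ref{cor:circ_bal}. Second, the size bound: the candidate set for each right-hand-side has cardinality roughly $n(10n^2)^{(n+2)^2}$, which is exponential, not ``polynomial-size'' (only its logarithm is polynomial), and ``tallying the recursion'' does not give the stated bound --- paying a factor $\lceil\log(L+1)\rceil$ at every node along a root-to-leaf path would yield an exponent of order $\depth(T')$, which can be as large as $\card{T'}$. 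The quasi-polynomial exponent $\log((4n+5)\card T)$ requires the balancing trick of binary-searching with the circuit of the \emph{smaller} child subtree and invoking the larger child's circuit only once; this step is absent from your size analysis.
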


Choosing~\eqref{eq_interpol_templ} to be integer linear programs expressing a separation problem for which we have lower bounds for separating monotone real circuits,
we obtain lower bounds for branch-and-bound trees for~\eqref{eq_interpol_templ}.
For example, combining Theorems~\ref{thm:qfeas_real_monotone_interpolation} and~\ref{thm:mon_circ_lb_BMS}, we immediately obtain the following sub-exponential bound:

\begin{theorem}\label{thm:bb_lb_BMS}
  Every family of branch-and-bound trees for~\eqref{eq:BMS_alternative}, where $k \define \lfloor \frac 1 8 (r /\log r )^{2/3}\rfloor$, has size at least~$2^{\Omega(n^{1/6 -\epsilon})}$, for every~$\epsilon >0$, where $n$ is the number of variables.
\end{theorem}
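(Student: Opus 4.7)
The plan is to directly chain the interpolation Theorem~\ref{thm:qfeas_real_monotone_interpolation} with Pudl\'ak's monotone circuit lower bound of Theorem~\ref{thm:mon_circ_lb_BMS} and then solve for $|T|$. Concretely, note that \eqref{eq:BMS_alternative} is already written in the form~\eqref{eq_interpol_templ} once one identifies $x$ (the coloring variables) with the first block, $y$ (the clique variables) with the second block, and $z$ (the graph variables) with the communication variables; the coefficient signs on $z$ are exactly those required by~\eqref{eq_interpol_templ}. By construction, the associated pair of projections $(Z_1,Z_2)$ is the clique-coloring pair on $r$ vertices with clique parameter $k$. Thus any branch-and-bound tree $T$ for~\eqref{eq:BMS_alternative} yields, via Theorem~\ref{thm:qfeas_real_monotone_interpolation}, a monotone real circuit of size at most
\[
  S \;=\; 50(n+1)^2\,|T|^2\cdot\bigl[(n+2)^2\log(10n^3+3)\bigr]^{\log((4n+5)|T|)}
\]
that separates the CC-pair with parameters $(r,k)$.

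Next I would apply Theorem~\ref{thm:mon_circ_lb_BMS} with $k=\lfloor\tfrac{1}{8}(r/\log r)^{2/3}\rfloor$, giving $S\ge 2^{\Omega((r/\log r)^{1/3})}$. Taking logarithms on the size bound above yields
\[
  \log S \;=\; O\!\bigl((\log n+\log|T|)\cdot \log n\bigr),
\]
so combining the two estimates produces the inequality
\[
  (\log n+\log|T|)\cdot \log n \;\geq\; \Omega\bigl((r/\log r)^{1/3}\bigr).
\]

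I would then translate the parameters: the variable count of~\eqref{eq:BMS_alternative} is $n=n_1+n_2+n_3=r(k-1)+r+\binom{r}{2}=\Theta(r^2)$, since $k=O(r^{2/3})$ is dominated by the $\binom{r}{2}$ term from the $z$-variables. Consequently $\log n=\Theta(\log r)$, and solving for $\log|T|$ gives
\[
  \log|T| \;\geq\; \Omega\!\left(\frac{r^{1/3}}{\log^{4/3} r}\right)
           \;=\; \Omega\!\left(\frac{n^{1/6}}{\log^{4/3} n}\right),
\]
which is $\Omega(n^{1/6-\epsilon})$ for every $\epsilon>0$, as claimed.

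The only real step is the calculation above; no new structural argument is needed, since Theorem~\ref{thm:qfeas_real_monotone_interpolation} carries all the work. The one point that requires mild care is checking that the $\log|T|$ factor appearing in the exponent of the circuit size bound is benign: because this factor is multiplied by $\log n=\Theta(\log r)$, it only shifts the final exponent by a $\log^{-O(1)} n$ factor, which is absorbed into the $n^{-\epsilon}$ slack. Hence the polylogarithmic overhead inherent in the quasi-feasible (rather than fully feasible) interpolation does not weaken the resulting lower bound beyond the $\epsilon$ in the exponent.
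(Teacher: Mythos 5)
Your proposal is correct and follows essentially the same route as the paper: chain Theorem~\ref{thm:qfeas_real_monotone_interpolation} with Theorem~\ref{thm:mon_circ_lb_BMS}, use $n=\Theta(r^2)$ so that $n_3=\Theta(n)$, and observe that the quasi-polynomial overhead $[(n+2)^2\log(10n^3+3)]^{\log((4n+5)|T|)}$ only contributes a polylogarithmic factor in the exponent, absorbed by the $\epsilon$ slack. The only cosmetic difference is that the paper argues by contradiction (a family of trees of size $2^{O(n^{1/6-\epsilon})}$ would give circuits of size $2^{O(n^{1/6-\epsilon/2})}$), whereas you solve the resulting inequality directly for $\log|T|$; these are the same argument in contrapositive form.
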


We note that one could make this bound completely explicit, i.e., for fixed~$\epsilon>0$, we can give~$N\in \N$ and~$\delta >0$, such that any branch-and-bound tree for~\eqref{eq:BMS_alternative} has size at least~$2^{\delta n^{1/6 -\epsilon}}$ for every~$n\geq N$, by tracking the factor hidden in the $\Omega$-notation in Theorem~\ref{thm:mon_circ_lb_BMS} through its proof (see~\cite{pudlak1997lower} building on~\cite{alon1987monotone} as presented in~\cite{wegener1987complexity}).
Alternatively, we can give a similar bound for the BMS-pair, which can then
be made explicit by some elementary calculations.

Since the encoding length~$L$ of~\eqref{eq:BMS_alternative} satisfies $L \in \Theta(n^2)$, we immediately obtain: 
\begin{cor}
  Every family of branch-and-bound trees for~\eqref{eq:BMS_alternative},
  where $k \define \lfloor \frac 1 8 (r /\log r )^{2/3}\rfloor$, has size
  at least~$2^{\Omega(L^{1/12 -\epsilon})}$, for every~$\epsilon >0$, where
  $L$ is the encoding length~\eqref{eq:BMS_alternative}.
\end{cor}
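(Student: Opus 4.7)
The plan is a mechanical substitution into the bound of Theorem~\ref{thm:bb_lb_BMS}, using only the encoding-length estimate $L \in \Theta(n^2)$ asserted just before the corollary. Since Theorem~\ref{thm:bb_lb_BMS} already yields size at least $2^{\Omega(n^{1/6-\epsilon'})}$ for every $\epsilon' > 0$, the whole content of the corollary reduces to an algebraic change of variables together with a careful justification of the encoding-length claim.

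First I will verify the encoding length. Counting variables, $n_1 = r(k-1)$, $n_2 = r$, $n_3 = \binom{r}{2}$, and since $k = \lfloor \tfrac{1}{8}(r/\log r)^{2/3}\rfloor = o(r)$, the block of $z$-variables dominates and $n = \Theta(r^2)$. Counting constraints, \eqref{eq:BMS_alternative} has $O(r^{8/3})$ inequalities, each with $O(1)$ non-zero coefficients of absolute value at most $\max(k,2)$. A sparse encoding therefore uses $O(r^{8/3}\log r) = O(n^{4/3}\log n)$ bits, which is comfortably $O(n^2)$; this upper bound on $L$ (equivalently, the lower bound $n = \Omega(\sqrt L)$) is the only direction of the $\Theta$-claim actually required by the corollary, so no matching $\Omega(n^2)$ estimate is needed.

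Substituting $n = \Omega(\sqrt L)$ into the bound of Theorem~\ref{thm:bb_lb_BMS} yields, for every $\epsilon' > 0$,
\[
  2^{\Omega(n^{1/6-\epsilon'})} \;=\; 2^{\Omega(L^{(1/6-\epsilon')/2})} \;=\; 2^{\Omega(L^{1/12-\epsilon'/2})}.
\]
Given any $\epsilon > 0$, applying Theorem~\ref{thm:bb_lb_BMS} with $\epsilon' \coloneqq 2\epsilon$ produces exactly the bound $2^{\Omega(L^{1/12-\epsilon})}$ claimed in the corollary. There is no real obstacle in this derivation; the only point worth attention is the $\epsilon$-bookkeeping, which merely loses a factor of $2$ in the inner exponent and is absorbed into the universal quantification over $\epsilon$.
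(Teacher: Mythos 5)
Your proposal is correct and follows essentially the same route as the paper, which derives the corollary immediately from Theorem~\ref{thm:bb_lb_BMS} together with the relation between the encoding length $L$ and the number of variables $n$; your substitution $n=\Omega(\sqrt L)$ with $\epsilon'=2\epsilon$ is exactly this step. Your added observation that only the upper bound $L=O(n^2)$ is needed (and your sparse-encoding count establishing it) is a sound and slightly more careful justification of the paper's one-line claim that $L\in\Theta(n^2)$.
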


We also obtain a similar result for random CNFs:

\begin{theorem}
  \label{thm:3CNF_hard}
  If $\C$ is a random $k$-CNF with $2n$ variables and
  $O(n\,2^k)$-clauses, where $k\geq c\log n$ for a constant $c>1$, then any branch-and-bound
  tree for~(\ref{eq:CNF_ILP}) for $\C$
  has size at least~$2^{n^{\Omega(1)}}$ with high probability.
\end{theorem}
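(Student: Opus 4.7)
The plan is to chain Theorem~\ref{thm:qfeas_real_monotone_interpolation}, the correspondence between interpolants and $(X_0,X_1)$-certificates exhibited at the end of Section~\ref{sec:prelim}, and the lower bound of Theorem~\ref{thm:CNF_cert_compl}. Given a random $k$-CNF $\C = \{C_1,\dots,C_m\}$ on $2n$ variables, I first fix an equipartition $X_0\cup X_1$ of its variables, write $C_i = C_i^0 \cup C_i^1$ according to this partition, introduce fresh variables $Y = \{y_1,\dots,y_m\}$, and form the auxiliary CNF~$\D$ containing the clauses $C_i^0\cup\{\lnot y_i\}$ and $C_i^1\cup\{y_i\}$, exactly as in the discussion at the end of Section~\ref{sec:prelim}. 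The goal is to show that a branch-and-bound tree of size~$S$ for~\eqref{eq:CNF_ILP} applied to~$\C$ forces a monotone real $(X_0,X_1)$-certificate of size quasi-polynomial in~$S$ and $N \define 2n+m$, which, since $m = O(n\,2^k)$ and we are in the regime $k = O(\log n)$, is quasi-polynomial in~$S$ and~$n$; Theorem~\ref{thm:CNF_cert_compl} will then force $S \geq 2^{n^{\Omega(1)}}$.

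The first step is to transfer a branch-and-bound tree~$T$ refuting~\eqref{eq:CNF_ILP} for~$\C$ into a tree of the same size for~$\D$. The key observation is that the LP constraint $\sum_{x\in C_i} x + \sum_{\lnot x \in C_i}(1-x) \geq 1$ of~$\C$ is the sum of the two $\D$-LP constraints for $C_i^0\cup\{\lnot y_i\}$ and $C_i^1\cup\{y_i\}$, after the extra constant~$1$ cancels. Hence every $(x,y)\in[0,1]^{X_0\cup X_1\cup Y}$ satisfying $\D$'s LP relaxation projects to an $x$ satisfying $\C$'s LP relaxation, so every leaf of~$T$ that is LP-infeasible for~$\C$ remains so for~$\D$. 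Since the disjunctions of~$T$ only involve the variables of~$\C$, the very same tree is a valid branch-and-bound proof for~$\D$.

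Now $\D$ fits the shape~\eqref{eq_interpol_templ} with $X_0$ playing the role of~$x$, $X_1$ playing the role of~$y$, and $Y$ playing the role of~$z$: the coefficient of each~$y_i$ in the first block is $+1 \geq 0$ and in the second block is $-1 \leq 0$, matching the sign conditions $C\geq 0$ and $D\leq 0$. Applying Theorem~\ref{thm:qfeas_real_monotone_interpolation} to the $\D$-tree of size~$S$ obtained above yields a monotone real circuit of size $50(N+1)^2 S^2 \cdot [(N+2)^2\log(10N^3+3)]^{\log((4N+5)S)}$ separating the sets $Z_1, Z_2$ associated to~$\D$. By the equivalence exhibited at the end of Section~\ref{sec:prelim}, this separator is precisely an $(X_0,X_1)$-certificate for~$\C$.

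Finally, Theorem~\ref{thm:CNF_cert_compl} ensures that with high probability every $(X_0,X_1)$-certificate for such a random CNF~$\C$ has monotone real circuit complexity at least $2^{n^{\Omega(1)}}$. Taking logarithms of the interpolation size bound gives $O(\log S\cdot\log N)+O(\log^2 N)$, and using $N = n^{O(1)}$ to compare against $n^{\Omega(1)}$ forces $\log S \geq n^{\Omega(1)}$, so $S\geq 2^{n^{\Omega(1)}}$ with high probability. The only subtle ingredient in this plan is the transfer step from~$\C$ to~$\D$: one must ensure that augmenting the formula with the auxiliary $y_i$ variables does not inflate the proof complexity. Here this is automatic since $\D$'s LP constraints dominate $\C$'s and the tree reuses no disjunctions on the new variables, but in a slightly different setting this is precisely the point where one would need to work harder.
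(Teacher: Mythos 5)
Your proposal is correct and follows essentially the same route as the paper: the transfer of the tree from $\C$ to $\D$ is exactly Observation~\ref{obs:split_easy}, the application of Theorem~\ref{thm:qfeas_real_monotone_interpolation} to $\D$ (with $X_0$, $X_1$, $Y$ in the roles of $x$, $y$, $z$) and the identification of the resulting separator with an $(X_0,X_1)$-certificate is Lemma~\ref{lem:inf_cert_interpol}, and the conclusion via Theorem~\ref{thm:CNF_cert_compl} matches the paper's final contradiction argument. The only cosmetic difference is that you phrase the size accounting in the regime $m=\poly(n)$, whereas the paper absorbs $n$ and $m$ into the assumed bound $f(n)$ on the tree size.
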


\smallskip

The remainder of this section outlines how we prove Theorem~\ref{thm:qfeas_real_monotone_interpolation}.
For this we will require three ingredients.

Let~$P\in \R^{n_1}$ and~$Q\in \R^{n_2}$ be two polytopes.  Our first
ingredient is the fact that given a certified branch-and-bound
tree~$T$ showing the integer-freeness of~$P \times Q$, there is
a branch-and-bound tree for the integer-freeness of~$P$, which
is structurally very close to~$T$, or there is such a tree showing the
integer-freeness of~$Q$. This is helpful for showing
Theorem~\ref{thm:qfeas_real_monotone_interpolation}, since after
fixing the variables~$z$, the feasible region of the LP-relaxation
of~(\ref{eq_interpol_templ}) is a product of two lower-dimensional polytopes:
\[
  \{x\in[0,1]^{n_1}~|~Ax \leq a- Cz\} \times \{y\in[0,1]^{n_2}~|~
  By \leq b- Dz\}.
\]

Given a branch-and-bound tree~$T$ for showing integer-freeness of
$P\times Q \subseteq \R^{n_1 \times n_2}$, a branch-and-bound tree~$T'$ for
$P$ \emph{conforms to~$T$}, if their underlying directed graphs
(including the ordering of the children as~$\leq$- and
$\geq$-children) are identical and, if the disjunction used by~$T$ at
a node~$N$ is
$\alpha^\top x + \beta^\top y \leq \delta \lor \alpha^\top x + \beta^\top y \geq
\delta+1$, then the disjunction used by~$T'$ at~$N$ is
$\alpha^\top x \leq \delta' \lor \alpha^\top x \geq \delta'+1$ for some
$\delta' \in \Z$.

A precursor to our first ingredient will be the following Lemma.

\begin{lemma}{(Structural Interpolation Lemma)}\label{lem:conformal_interpolation}
  For every branch-and-bound tree~$T$ for an integer-free product of polytopes~$P\times Q$
  there exits
  \begin{enumerate}[label=(\alph*)]
  \item \label{case:confP} a branch-and-bound tree~$T^P$ for~$P$ conforming to~$T$ or
   \item \label{case:confQ} a branch-and-bound tree~$T^Q$ for~$Q$ conforming to~$T$.
  \end{enumerate}
\end{lemma}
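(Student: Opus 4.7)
I would induct on the size $|T|$. The base case is a single-node $T$, which forces $P\times Q$ to be LP-infeasible, so either $P=\emptyset$ or $Q=\emptyset$; a single-leaf tree then trivially conforms to $T$ and witnesses integer-freeness of the empty factor.

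For the inductive step, let the root disjunction of $T$ be $\alpha^{\top}x+\beta^{\top}y\leq\delta\,\lor\,\alpha^{\top}x+\beta^{\top}y\geq\delta+1$ with subtrees $T_{\leq},T_{\geq}$. For $k\in\Z$ set $P_{\leq}^{(k)}=\{x\in P:\alpha^{\top}x\leq k\}$, $P_{\geq}^{(k)}=\{x\in P:\alpha^{\top}x\geq k+1\}$, and define $Q_{\leq}^{(\tau)},Q_{\geq}^{(\tau)}$ analogously using $\beta^{\top}y$. Two observations drive the argument: (i)~$T_{\leq}$ is a valid branch-and-bound tree for every product $P_{\leq}^{(k_1)}\times Q_{\leq}^{(\delta-k_2)}$ with $k_1\leq k_2$; (ii)~$T_{\geq}$ is valid for every product $P_{\geq}^{(k_1)}\times Q_{\geq}^{(\delta-k_2)}$ with $k_1\geq k_2-1$, the extra unit of slack coming from the $+1$ shift in the $\geq$-branch. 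Since each such product is integer-free as a subset of $P\times Q$, the inductive hypothesis supplies a conforming sub-tree for one of the two factors in either situation.

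Introduce thresholds $k_{P}^{\leq}=\sup\{k:T_{\leq}^{P_{\leq}^{(k)}}\text{ exists}\}$, $k_{Q}^{\leq}=\inf\{k:T_{\leq}^{Q_{\leq}^{(\delta-k)}}\text{ exists}\}$, and symmetrically $k_{P}^{\geq},k_{Q}^{\geq}$ on the $T_{\geq}$-side. Because existence of a conforming tree persists when the target polytope shrinks, each threshold governs a one-sided interval of integers. Applying the inductive hypothesis canonically at $k_1=k_2=k$ in~(i) covers $\Z=(-\infty,k_{P}^{\leq}]\cup[k_{Q}^{\leq},\infty)$, so $k_{Q}^{\leq}\leq k_{P}^{\leq}+1$. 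Applying it non-canonically at $k_1=m,\,k_2=m+1$ in~(ii)---sliding $T_{\geq}$ against its slack---covers $\Z=(-\infty,k_{Q}^{\geq}-1]\cup[k_{P}^{\geq},\infty)$, so $k_{P}^{\geq}\leq k_{Q}^{\geq}$.

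The main claim is then that either $k_{P}^{\geq}\leq k_{P}^{\leq}$ (Case~A) or $k_{Q}^{\leq}\leq k_{Q}^{\geq}$ (Case~B). If both failed, $k_{P}^{\geq}\geq k_{P}^{\leq}+1$ and $k_{Q}^{\leq}\geq k_{Q}^{\geq}+1$ together with the two bounds above would force $k_{P}^{\geq}\leq k_{Q}^{\geq}\leq k_{Q}^{\leq}-1\leq k_{P}^{\leq}$, contradicting $k_{P}^{\geq}>k_{P}^{\leq}$. In Case~A pick any integer $k\in[k_{P}^{\geq},k_{P}^{\leq}]$ and assemble $T^{P}$ by taking root disjunction $\alpha^{\top}x\leq k\,\lor\,\alpha^{\top}x\geq k+1$ and attaching the conforming sub-trees for $P_{\leq}^{(k)}$ and $P_{\geq}^{(k)}$; validity at the leaves is inherited from the sub-trees and the root threshold $k$ is integer by choice. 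Case~B is symmetric, yielding a $T^{Q}$ whose root disjunction is on $\beta^{\top}y$ at the integer threshold $\delta-k$. The main obstacle is precisely this asymmetry between the two halves of a disjunction: only the $\geq$-side carries the unit of slack, and without exploiting it through the non-canonical application in~(ii) one cannot rule out the ``mixed'' scenario in which canonical induction supplies a $T^{P}$-style sub-tree on one side of $T$'s root and a $T^{Q}$-style sub-tree on the other, with no single $k$ making both of the same type.
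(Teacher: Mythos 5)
Your argument is correct and is essentially the paper's own proof in different notation: the induction on $\card{T}$, the shifted products $P_\leq^{(k)}\times Q_\leq^{(\delta-k)}$ and the slack-shifted application on the $\geq$-side are exactly the paper's $(P\times Q)_{\leq\gamma}$ and $(P\times Q)_{\geq\gamma}$ (with the ``$+1$'' assigned to $P$), your four monotone threshold sets play the role of its non-decreasing/non-increasing functions $\mu_\leq,\mu_\geq$, and your Cases A/B are its cases (i)/(ii). The only point worth adding is the paper's one-line remark that for sufficiently extreme thresholds the corresponding factor becomes empty, so that any conforming tree is vacuously valid: this is what guarantees each of the four threshold sets is nonempty, and hence that Case A or B actually supplies an integer $k$ rather than holding only as a degenerate inequality between infinite suprema and infima.
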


By applying Lemma~\ref{lem:conformal_interpolation} to a smallest branch-and-bound tree
for~$P\times Q$, we immediately obtain the following result, which is of independent interest.

\begin{cor}
  $\SBB(P \times Q) = \min(\SBB(P), \SBB(Q))$
\end{cor}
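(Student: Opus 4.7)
The plan is to prove the two inequalities $\SBB(P \times Q) \leq \min(\SBB(P), \SBB(Q))$ and $\SBB(P \times Q) \geq \min(\SBB(P), \SBB(Q))$ separately, where the first (easy) direction follows directly from the definitions and the second (hard) direction is precisely what Lemma~\ref{lem:conformal_interpolation} is designed to deliver. For the easy direction, I would take a branch-and-bound tree $T^P$ for $P$ and reinterpret it verbatim as a tree for $P \times Q$: each disjunction of $T^P$ depends only on the $x$-variables, so it remains a valid disjunction on $(x,y)$, and LP-infeasibility at each leaf is preserved because adjoining the constraints describing $Q$ in the $y$-variables to a system in $x$ which is already LP-infeasible keeps it LP-infeasible. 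This yields $\SBB(P \times Q) \leq \SBB(P)$, and the symmetric argument applied to a tree for $Q$ gives $\SBB(P \times Q) \leq \SBB(Q)$.

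For the harder direction, I would first dispense with the case in which $P \times Q$ contains an integer point: projecting such a point onto its two blocks produces integer points in both $P$ and $Q$, so all three quantities equal $+\infty$ and the identity is trivial. Otherwise $P \times Q$ is integer-free and admits a smallest branch-and-bound tree $T$ with $|T| = \SBB(P \times Q)$. Invoking Lemma~\ref{lem:conformal_interpolation} then produces either a tree $T^P$ for $P$ conforming to $T$ or a tree $T^Q$ for $Q$ conforming to $T$. Since conformity forces the underlying directed graph (and hence the number of leaves) to coincide with that of $T$, we obtain $\SBB(P) \leq |T|$ or $\SBB(Q) \leq |T|$, and therefore $\min(\SBB(P), \SBB(Q)) \leq \SBB(P \times Q)$, completing the proof.

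There is essentially no obstacle once Lemma~\ref{lem:conformal_interpolation} is in hand; the corollary is a short bookkeeping consequence, and the paper's phrasing (``By applying [the lemma] to a smallest branch-and-bound tree for $P \times Q$'') already signals exactly this derivation. The only minor care is to confirm that the easy direction handles empty polytopes correctly, which it does because a single-node tree at an LP-infeasible root is permitted as a valid branch-and-bound tree, so all edge cases where $P$ or $Q$ is empty are absorbed into the trivial bounds.
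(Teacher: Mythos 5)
Your proof is correct and follows essentially the same route as the paper: the paper obtains the corollary immediately by applying Lemma~\ref{lem:conformal_interpolation} to a smallest tree for $P\times Q$ (your ``hard'' direction), with the converse inequality and the integer-feasible edge cases left implicit exactly as you fill them in. Your explicit verification of the easy direction and of the $+\infty$ cases is fine and adds nothing beyond routine bookkeeping.
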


Note that Lemma~\ref{lem:conformal_interpolation} does not give any
information of what the right hand side of the disjunctions used at
the nodes of~$T^P$ or~$T^Q$ should be, while almost every other
property of the tree is preserved.  The remaining two ingredients for
the proof of Theorem~\ref{thm:qfeas_real_monotone_interpolation} are
intended to supply this information.  To this end, it would be
helpful, if Farkas-certificates maintain their validity when
passing from~$T$ to~$T^P$ or~$T^Q$. This is due to the fact that it is
not clear how we can decide the LP-feasibility of a system of linear
inequalities (even without integrality constraints), since we
need to perform computations via monotone real circuits. However, it
is easy to check whether a given Farkas-certificate is valid.

Unfortunately, Lemma~\ref{lem:conformal_interpolation} does not seem to hold
for \emph{certified} branch-and-bound trees.
Indeed, the naive way to obtain Farkas-certificates for~$T^P$
(or~$T^Q$) from Farkas-certificates for~$T$ does not work, i.e., for a leaf~$L$ in~$T^P$ using the
projection of the Farkas-certificate~$f^L$ attached to~$L$ in~$T$ onto
constraints of $P$ and branching constraints.  As a counter example, consider the square of
the two-dimensional cross-polytope
\begin{align*}
  C_x^2 \times C_y^2 \define\quad & \{x_1,x_2\in[0,1] \colon\; \sum_{i\in S} x_i + \sum_{i\not\in S} (1-x_i) \leq \tfrac{3}{2}\quad \forall S\subseteq \{1,2\}\} \\
  \times &\{y_1,y_2\in[0,1] \colon\; \sum_{i\in S} y_i + \sum_{i\not\in S} (1-y_i) \leq \tfrac{3}{2}\quad \forall S \subseteq \{1,2\}\}
\end{align*}
and the branch-and-bound tree shown in Figure~\ref{fig:extree}
for~$C_x^2 \times C_y^2$. Here, let all Farkas-certificates at the leaves
have the form
\begin{equation*}
  \begin{aligned} 
  &&1\,\cdot &\qquad\textstyle(\sum_{i\in S} x_i + \sum_{i\not\in S} (1-x_i) \leq \tfrac{3}{2}) \\ \textstyle
  +&\textstyle\sum_{i\in S}&1\,\cdot &\qquad (-x_i \leq -1)\\\textstyle
  +&\textstyle\sum_{i\not\in S}&1\,\cdot &\qquad (x_i \leq 0)\\
  \midrule
  &&&\qquad (0 \leq -0.5)
\end{aligned}
\end{equation*}
for some~$S\subseteq \{1,2\}$ or the analogous form for variables
from~$C_y^2$. For any leaf, there is only one such choice.

It is not hard to see that no choice of new right-hand-sides~$\hat{\delta}$ can
make every Farkas-certificate~$\hat{f}^L$ obtained as described above valid for all
leaves simultaneously, when we try to obtain a conforming branch-and-bound
tree for either~$C_x^2$ or~$C_y^2$: For~$C_x^2$, $\hat{\delta}_r \leq -1$ has to hold for the
right-hand-side~$\hat{\delta}_r$ used in the disjunction at the root $r$, if all
Farkas-certificates in the~$\leq$-branch at the root are to be valid. But
then it is impossible for both Farkas-certificates at leaves in
the~$\geq$-branch to be valid simultaneously. The situation for~$C_y^2$ is
similar.

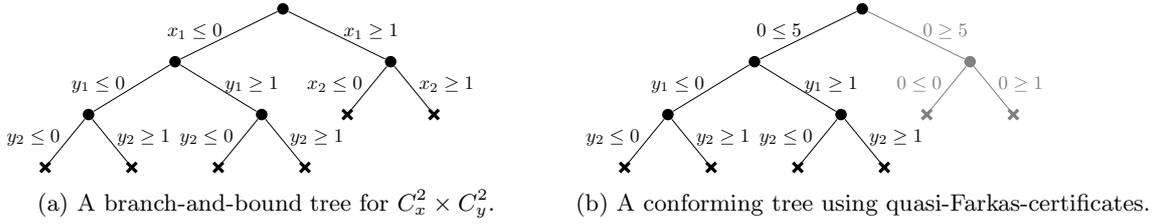
\begin{figure}
  \centering
  \begin{subfigure}[top]{0.47\linewidth}
    \scalebox{0.72}{
    \begin{forest}
for tree={circle, fill, inner sep=0pt, minimum size=6pt, l sep=20pt, s sep = 40pt}
[, 
    [, edge label={node[midway,left,yshift =2pt] {$x_1\leq 0$}}
      [,edge label={node[midway,left,yshift =2pt] {$y_1\leq 0$}},
        [,cross, edge label={node[midway,left,yshift =2pt] {$y_2\leq 0$}}]
        [,cross, edge label={node[midway,right,yshift =2pt] {$y_2\geq 1$}}]
      ]  
      [,edge label={node[midway,right,yshift =2pt] {$y_1\geq 1$}}
        [,cross, edge label={node[midway,left,yshift =2pt] {$y_2\leq 0$}}]
        [,cross, edge label={node[midway,right,yshift =2pt] {$y_2\geq 1$}}]
      ]
    ]
    [,edge label={node[midway,right,yshift =2pt] {$x_1\geq 1$}}
      [, cross, edge label={node[midway,left,yshift =2pt] {$x_2\leq 0$}}]
      [, cross, edge label={node[midway,right,yshift =2pt] {$x_2\geq 1$}}]
  ] 
]
\end{forest}}
\caption{A branch-and-bound tree for~$C_x^2 \times C_y^2$.}
  \label{fig:extree}
\end{subfigure}
\quad
\begin{subfigure}[top]{0.5\linewidth}
  \scalebox{0.72}{
    \begin{forest}
for tree={circle, fill, inner sep=0pt, minimum size=6pt, l sep=20pt, s sep = 40pt}
[, 
    [, edge label={node[midway,left,yshift =2pt] {$0\leq 5$}}
      [,edge label={node[midway,left,yshift =2pt] {$y_1\leq 0$}},
        [,cross, edge label={node[midway,left,yshift =2pt] {$y_2\leq 0$}}]
        [,cross, edge label={node[midway,right,yshift =2pt] {$y_2\geq 1$}}]
      ]  
      [,edge label={node[midway,right,yshift =2pt] {$y_1\geq 1$}}
        [,cross, edge label={node[midway,left,yshift =2pt] {$y_2\leq 0$}}]
        [,cross, edge label={node[midway,right,yshift =2pt] {$y_2\geq 1$}}]
      ]
    ]
    [,gray,edge label={node[midway,right,yshift =2pt] {$0\geq 5$}}, edge =gray
      [, cross, gray, edge label={node[midway,left,yshift =2pt] {$0\leq 0$}}, edge =gray]
      [, cross, gray, edge label={node[midway,right,yshift =2pt] {$0\geq 1$}}, edge =gray]
  ] 
]
\end{forest}}
\caption{A conforming tree using quasi-Farkas-certificates.}
  \label{fig:exconftree}
\end{subfigure}
\caption{A branch-and-bound tree for~$C_x^2 \times C_y^2$ showing that
  Lemma~\ref{lem:conformal_interpolation} does not hold for certified
  branch-and-bound trees. (b) shows a tree for~$C_y^2$ conforming to the one in (a) using
  quasi-Farkas-certificates. For the gray part of the tree, the second case
  from the definition of quasi-Farkas-certificates holds, i.e., we do not
  require the validity of Farkas-certificates there.}
\end{figure}

However, if we relax the notion of a Farkas-certificate very slightly,
then Lemma~\ref{lem:conformal_interpolation} holds also for certified
trees: Let~$P$ be a polytope and~$U$ an arbitrary set
with~$P \subseteq U$. A \emph{quasi-certified branch-and-bound
  tree}~$T$ for~$P$ relative to $U$ is a branch-and-bound tree
for~$P$, such that to every leaf~$L$ there is an attached
\emph{quasi-Farkas-certificate~$f^L$ relative to~$U$}, i.e., a
vector~$f^L \in \Z^{m_L}_+$ indexed by the~$m_L$ constraints of the
problem~$T_{L}(P)$ associated to~$L$, such that
  \begin{enumerate}
  \item $f^L$ is a valid Farkas-certificate for~$T_{L}(P)$ or
  \item an edge on the unique root-leaf path in~$T$ to~$L$ is labeled with a
    constraint~$\alpha^\top x \leq \gamma$ ($\alpha^\top x \geq \gamma +
    1$), such that~$U \cap \{\alpha^\top x \leq \gamma\} = \emptyset$ ($U
    \cap \{\alpha^\top x \geq \gamma + 1\} = \emptyset$).
  \end{enumerate}
  Note that~$f^L$ does not appear in the second condition.  A
  quasi-certified branch-and-bound tree~$T'$ for~$P$ \emph{conforms}
  to a quasi-certified branch-and-bound tree~$T$ for~$P\times Q$, if
  it does so as (uncertified) branch-and-bound tree and moreover the
  quasi-Farkas-certificate~$(f')^L$ attached to a leaf~$L$ of~$T'$ is
  the projection of the quasi-Farkas-certificate~$f^L$ attached to the
  leaf~$L$ in~$T$ onto variables corresponding to constraints of $P$
  and branching constraints. 

  In the above example, there exists a quasi-certified branch-and-bound relative to~$[0,1]^2$
  conforming to the previously problematic tree, which is shown in Figure~(\ref{fig:exconftree}).
  More generally, we have:

\begin{lemma}{(Certified Structural Interpolation Lemma)}\label{lem:conformal_cert_interpolation}
  For every quasi-certified branch-and-bound tree~$T$ for an
  integer-free product of polytopes~$P\times Q$ relative
  to~$U = U_P\times U_Q$, where~$P\subseteq U_P$,
  $Q \subseteq U_Q$ and $U_P$ and $U_Q$ are bounded,
  there exists
  \begin{enumerate}[label=(\alph*)]
  \item \label{case:cert_confP} a quasi-certified branch-and-bound tree~$T^P$ for~$P$ relative to~$U_P$ conforming to~$T$ or
   \item \label{case:cert_confQ} a quasi-certified branch-and-bound tree~$T^Q$ for~$Q$ relative to~$U_Q$ conforming to~$T$.
  \end{enumerate}
\end{lemma}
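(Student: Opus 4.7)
The plan is to prove the lemma by strong induction on $\card{T}$.

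\textbf{Base case.} When $\card{T} = 1$, the lone leaf has no branching ancestors, so condition~2 of the quasi-Farkas definition is vacuous and the attached $f^L$ must be a genuine Farkas certificate for $P\times Q$. Splitting $f^L = (f_P, f_Q)$ according to the block decomposition of the constraint matrix gives $f_P^\top A = 0$, $f_Q^\top B = 0$, and $f_P^\top a + f_Q^\top b < 0$, so at least one of $f_P^\top a < 0$ or $f_Q^\top b < 0$ holds; the corresponding projection is itself a Farkas certificate for $P$ or for $Q$, which becomes the leaf certificate of the single-node $T^P$ or $T^Q$. Conforming is immediate.

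\textbf{Inductive step.} For $\card{T} \geq 2$, write the root disjunction as $\alpha^\top x + \beta^\top y \leq \delta \lor \alpha^\top x + \beta^\top y \geq \delta+1$ with subtrees $T_\leq$ and $T_\geq$. The crucial \emph{product observation} I would establish is that, for \emph{any} integer $\delta^P$ (setting $\delta^Q := \delta - \delta^P$), $T_\leq$ is also a quasi-certified branch-and-bound tree for the product $P^\leq \times Q^\leq$, where $P^\leq := P \cap \{\alpha^\top x \leq \delta^P\}$ and $Q^\leq := Q \cap \{\beta^\top y \leq \delta^Q\}$: every point of this product automatically satisfies the root constraint of $T$, and each leaf's quasi-Farkas certificate in $T$ translates to one for the new product leaf-LP by reassigning the multiplier it had on the root disjunction to the two new $\alpha^\top x \leq \delta^P$ and $\beta^\top y \leq \delta^Q$ constraints. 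Symmetrically, $T_\geq$ is quasi-certified for $P^\geq \times Q^\geq$. Since both subtrees are strictly smaller than $T$, the induction hypothesis applies to each and returns a conforming quasi-certified tree for one of the two factors.

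\textbf{Assembly and main obstacle.} If both recursive calls return $P$-trees (resp.\ $Q$-trees), I attach them below a fresh root branching on $\alpha^\top x \leq \delta^P \lor \alpha^\top x \geq \delta^P + 1$ (resp.\ on $\beta^\top y$) to obtain the required conforming $T^P$ or $T^Q$. The main obstacle is the mismatch case, where one recursive call produces a $P$-tree and the other a $Q$-tree. To overcome this I would choose $\delta^P$ from one of four extreme integer candidates derived from the bounded sets $U_P$ and $U_Q$: $\lceil \max_{x \in U_P} \alpha^\top x \rceil$, $\lfloor \min_{x \in U_P} \alpha^\top x \rfloor - 1$, and the two analogous values for $\delta^Q$ derived from $\beta^\top y$ on $U_Q$. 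Each such choice forces one of the root edges of the assembled $T^P$ or $T^Q$ to satisfy quasi-Farkas condition~2 automatically, because that root edge already makes $U_P$ or $U_Q$ disjoint from the added half-space. On the \emph{trivialized} side I can then freely mirror the shape of $T_\leq$ or $T_\geq$ using the projected leaf certificates, since condition~2 is inherited by every leaf via the root edge of the attached subtree.

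\textbf{Completing the argument.} What remains is a finite case analysis over the two mismatch patterns, verifying that at least one of the four extreme choices of $\delta^P$ is consistent: the trivialized side is covered by condition~2 through the root edge, while the non-trivialized side admits, under that specific $\delta^P$, the inductive conforming tree on the matching factor. The boundedness of $U_P$ and $U_Q$ enters precisely here to guarantee the four extreme integers are finite and hence valid choices of $\delta^P$; the recursion on the non-trivialized side works because the product observation holds for \emph{every} integer $\delta^P$, including the extreme ones.
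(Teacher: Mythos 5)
Your base case and your ``product observation'' (reassigning the multiplier of the root constraint to the two new constraints $\alpha^\top x\leq\delta^P$ and $\beta^\top y\leq\delta^Q$) match the paper's argument. The gap is in the inductive step: the induction hypothesis, applied to $T_\leq$ and $T_\geq$ for a \emph{fixed} split parameter, only returns ``a $P$-tree or a $Q$-tree'' for each subtree separately, and which factor you get can vary with the parameter and can differ between the two subtrees. The whole difficulty of the lemma is to produce a \emph{single} integer right-hand side that works for both subtrees simultaneously (and, for the $Q$-outcome, with the correct offset: if both subproblems use the same $\delta^Q$, the induced $Q$-constraints are $\beta^\top y\leq\delta^Q$ and $\beta^\top y\geq\delta^Q$, which is not a legal disjunction -- the ``$+1$'' can sit on only one factor, so assembling a $T^Q$ needs the left parameter and the right parameter to differ by one, an issue your proposal never mentions). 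The paper resolves this by observing that the predicate ``case (a) (resp.\ (b)) holds for $T(N_\leq)$ and $(P\times Q)_{\leq\gamma}$, possibly vacuously via $U_P$- or $U_Q$-emptiness'' is monotone in $\gamma$ (shrinking a polytope preserves validity of a conforming tree and of its certificates), encodes this in non-decreasing/non-increasing functions $\mu_\leq,\mu_\geq\in\{-1,0,1\}$ that are not identically $\pm1$, and extracts a crossing value $\gamma$ with either $\mu_\leq(\gamma)\leq0$ and $\mu_\geq(\gamma)\leq0$ (build $T^P$) or $\mu_\leq(\gamma)=\mu_\geq(\gamma-1)=1$ (build $T^Q$). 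Nothing in your proposal plays the role of this monotonicity-plus-crossing argument.

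Your fallback of trying four extreme values of $\delta^P$ does not repair the mismatch case. Trivializing one root edge (say $U_P\cap\{\alpha^\top x\geq\delta^P+1\}=\emptyset$) only discharges the subtree on the trivialized side; you still need the \emph{other} subtree to admit a conforming tree of the matching factor at that extreme parameter, and there is no reason it does -- the induction hypothesis may return the wrong factor there. In fact, at the extremes the mismatch pattern generically persists: for very large $\delta^P$ the left subproblem has an essentially unconstrained $P$-part (so a conforming $P$-tree for $T_\leq$ need not exist) while the vacuously satisfied condition is the one that was already unproblematic, and symmetrically for the other three extremes. The correct right-hand side is in general an interior value singled out by the crossing argument, which your strategy never searches for; the ``finite case analysis'' you defer is exactly where the real work lies, and as stated it would fail. (A smaller omission: when $U\cap\{\alpha^\top x+\beta^\top y\leq\delta\}=\emptyset$ the subtree is not literally a quasi-certified tree for the product relative to $U$, and the paper folds this degenerate case into the definition of $\mu_\leq,\mu_\geq$; your proposal also needs to handle it before invoking the induction hypothesis.)
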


Passing from certified trees to quasi-certified trees allows us to
choose not to branch at a node~$N$ of~$T$ during construction
of~$T^P$
or~$T^Q$ and instead immediately proceed as in one of the subtrees
rooted at the children
of~$N$.  Conceptually, it might seem cleaner to consider certified
branch-and-bound trees which embed
into~$T$ for some suitable notion of embedding. However, this does not
appear to work well with certificates. Moreover, we do not want to be
forced to enumerate over all possible embedded trees later.
\smallskip

As previously mentioned, the last two ingredients for the proof of
Theorem~\ref{thm:qfeas_real_monotone_interpolation} instruct us how to
reconstruct the right-hand-sides of the disjunctions in the tree~$T^P$
or~$T^Q$ given by Lemma~\ref{lem:conformal_cert_interpolation}.  The
first ingredient is Theorem~\ref{thm:coeff_size}, which establishes
that the set of possible right-hand-sides is not too large (if we
recompile~$T$ beforehand).

It then remains to show that if a tree~$T^P$ as in
Lemma~\ref{lem:conformal_cert_interpolation} exits, we can search
the space of possible right-hand-side -- whose size is limited by
Theorem~\ref{thm:coeff_size} -- efficiently via binary search, even in
the quite restricted computational model of monotone real circuits.


More concretely, assume we have chosen values for the right-hand-sides
of all disjunctions used at a descendant of a node $N$ in $T$ and are now
choosing a value~$\gamma_N$ for the disjunction used at~$N$.
If we consider (say) the $(\alpha_N^\top x\geq \gamma_N+1)$-branch at
$N$, then we want to choose $\gamma_N$ as small as possible (i.e., we
want to choose the inequality $\alpha_N^\top x\geq \gamma_N+1$ as weak as
possible), such that we still obtain the validity of all
quasi-Farkas-certificates in the $(\alpha_N^\top x\geq \gamma_N+1)$-branch
(depending on the choices for the right-hand-sides for all
disjunctions used at ancestors of $N$).
Assume that we have already expressed the simultaneous validity of the
quasi-Farkas-certificates in the $(\alpha_N^\top x\geq \gamma_N+1)$-branch
via a monotone real circuit computing values in $\{0,1\}$. The following
result states we can efficiently compute the smallest possible value of
$\gamma_N$ making these certificates valid via a monotone real circuit:

\begin{lemma}{(Oblivious Binary Search Lemma)}\label{lem:obl_bin}
  For a monotone real circuit~$C$ computing values in $\{0,1\}$ with~$k$ inputs,
  $\Lambda_{\max}\in \R$
  such that $C(x_1,\dots, x_{k-1},\Lambda_{\max}) = 1$ for
  all~$x_1,\dots, x_{k-1}\in\R$, and any~$q\in \N$, there exists a
  monotone real circuit~$\tilde C$ of size~$\card C \cdot q$
  which computes
     \[b \colon (x_1,\dots,x_{k-1}) \mapsto \max \big\{\lambda\in \{0,\dots,2^{q}-1\}~|~ \text{$C(x_1,\dots,x_{k-1},\Lambda_{\max} -\lambda) = 1$}\big\}.\]
\end{lemma}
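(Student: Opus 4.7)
The plan is to build $\tilde C$ recursively in $q$, mimicking a binary search through a chain of monotone selection gates. It is convenient to prove a slightly stronger statement: for every monotone-in-$x$ function $\Lambda(x)$ with $C(x,\Lambda(x)) = 1$ identically, there is a monotone real circuit of the desired size computing
$$D_q(x) \;:=\; \max\bigl(\{0\} \cup \{\lambda \in \{0,\dots,2^q-1\} : C(x, \Lambda(x) - \lambda) = 1\}\bigr).$$
The target $b(x)$ is then the special case $\Lambda(x) \equiv \Lambda_{\max}$.

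The base case $q = 1$ is immediate: $D_1(x) = C(x,\Lambda(x) - 1)$, obtained from a single fresh copy of $C$ whose last input is pre-composed with the monotone shift $\Lambda \mapsto \Lambda - 1$, uses $\card{C}$ gates. For the inductive step I introduce a pivot $b := C(x,\Lambda(x) - 2^{q-1})$ (one fresh copy of $C$) and invoke the inductive hypothesis twice: once to obtain a low-branch estimate $P := D_{q-1}(x)$ with the same $\Lambda$, and once to obtain a high-branch estimate $R$ by running $D_{q-1}$ with the constant-shifted input $\Lambda(x) - 2^{q-1}$ (still monotone in $x$) in place of $\Lambda$. The three are combined through the identity
$$D_q(x) \;=\; \max\bigl(P,\; 2^{q-1}\cdot b + R\bigr).$$
Each of $b$, $P$, $R$ is monotone in $x$, and $\max$, addition, and multiplication by the positive constant $2^{q-1}$ preserve monotonicity, so the resulting circuit is a genuine monotone real circuit.

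Correctness is a case analysis on $b$. If $b = 0$ then $C(x, \Lambda - 2^{q-1}) = 0$, and by monotonicity of $C$ in its last coordinate this forces $C(x, \Lambda - 2^{q-1} - \lambda) = 0$ for all $\lambda \geq 0$; hence $R = 0$ (by the $\{0\} \cup \dots$ convention), $P = \lambda^*$, and the max returns $\lambda^*$. If $b = 1$, the low branch saturates at $P = 2^{q-1} - 1$ while $R = \lambda^* - 2^{q-1}$, so $2^{q-1} + R = \lambda^*$ dominates the max.

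The main obstacle is matching the stated size bound. The naive two-branch recursion above gives only $\card{\tilde C_q} = 2\,\card{\tilde C_{q-1}} + \card{C} + O(1) \in O(\card{C} \cdot 2^q)$, whereas the lemma asks for $\card{C} \cdot q$. Closing this gap requires sharing almost all gates between the low- and high-branch recursive calls: since the two sub-problems differ only in a constant shift of the $\Lambda$-input, one implements them as a single recursive subcircuit whose $\Lambda$-input is pre-composed with a monotone-in-$b$ perturbation that reproduces both cases, so that a single recursive call of size $\card{\tilde C_{q-1}}$ suffices at each level. Verifying that this sharing yields a genuine monotone real circuit --- the tricky point is that the classical binary-search adjustments of the $\Lambda$-input are antitone in the previously computed pivots, so the perturbation must be carefully designed to keep every intermediate value monotone --- and that the resulting chain of $q$ copies of $C$ plus $O(q)$ constant-size combining blocks indeed computes $b(x)$, is the heart of the proof.
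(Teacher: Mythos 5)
Your recursion is the right binary search and your correctness analysis of it is fine, but the construction you actually give does not prove the lemma: with two genuine recursive calls per level it has size $\Theta(\card C\cdot 2^q)$, exponentially worse than the claimed $\card C\cdot q$. You acknowledge this and sketch the repair---collapse the low and high branches into a single recursive call whose last input is shifted by $2^{q-1}b$, i.e.\ compute $2^{q-1}b + D_{q-1}(x,\Lambda-2^{q-1}b)$---but this is exactly where the whole difficulty of the lemma sits, and you leave it unresolved. The shift $\Lambda-2^{q-1}b$ is antitone in the previously computed pivot $b$, so it cannot simply be ``pre-composed'' at an input gate of a monotone circuit; the composite $2^{q-1}b + D_{q-1}(x,\Lambda-2^{q-1}b)$ is monotone only as a whole (an increase of $b$ gains $2^{q-1}$ in the first summand and loses at most $2^{q-1}-1$ in the second), and turning this global cancellation into a gate-by-gate monotone circuit is the heart of the proof, which your proposal defers rather than supplies.

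The paper's proof provides precisely this missing mechanism. It works iteratively in $q$ phases, maintaining $b_i(x)=\lfloor b(x)\rfloor_{2^{p-i}}$, and makes the antitone update monotone by a value-encoding trick: first every gate of $C$ is post-composed with a monotone bijection onto $(0,\tfrac12)$ (an arctan rescaling, with the inverse pre-composed at successor gates), so that all internal values are small; then the current estimate $b_{i-1}$, rounded to granularity $2^{p-(i-1)}\ge 1$, is additively threaded through every gate on the paths from the last input to the output (the functions at those gates are modified to add $\lfloor\cdot\rfloor_{2^{p-(i-1)}}$ of the incoming value and apply the original $f_g$ only to the fractional part). Monotonicity is preserved because any increase of the threaded integer part is by at least $1$, which dominates the at most $\tfrac12$ the compressed part can lose; a final rounding post-composition at the output extracts $b_i$. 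This yields one copy of $C$ per phase and hence the bound $\card C\cdot q$. Without this (or an equivalent) construction, your proposal establishes only the exponential-size version of the lemma, which is not strong enough for its use in Theorem~\ref{thm:qfeas_real_monotone_interpolation}.
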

Note that  $\tilde C$ uses~$q$ invocations of~$C$ to find the
maximal $\lambda$ among the $2^q$ candidates in
$\{0,\dots,2^{q}-1\}$ causing $C$ to accept
on input $(x_1,\dots,x_{k-1},\Lambda_{\max} -\lambda)$,
which is the best we can reasonably expect while
treating $C$ as a black box. In fact, the
concrete form of the set of candidate values is irrelevant -- we can
search over any set of this cardinality with a circuit of the same
size. This reflects the fact that the definition of monotone circuits
does not make use of the arithmetical structure of the real numbers.
Finally, we note that the assumption
$C(x_1,\dots, x_{k-1},\Lambda_{\max}) = 1$ for
all~$x_1,\dots, x_{k-1}\in\R$ is purely for convenience and could be
replaced by one more invocation of~$C$ and a slightly less elegant
definition of $b$.

 A key observation is that we are able to choose the right-hand-side
 $\gamma_N$ of the disjunction at a node $N$ by querying the circuit
 corresponding to the smaller child subtree. We then have to invoke the
 circuit corresponding to the larger subtree only once, to ensure that
 this choice of $\gamma_N$ works for both subtrees. This will ensure
 the efficiency of our construction.  Formally, we have:

 \begin{cor}\label{cor:circ_bal}
  For~$\kappa$,~$\Lambda_{\min}$,~$\Lambda_{\max} \in \Z$ and monotone real circuit~$C_1$ and $C_2$ computing values in $\{0,1\}$ with~$k$ inputs each,
  such that $C_1(x_1,\dots, x_{k-1},\Lambda_{\max}) = 1$ for
  all~$x_1,\dots, x_{k-1}\in\R$ and 
  $C_2(x_1,\dots, x_{k-1},\kappa-\Lambda_{\min}) = 1$ for
  all~$x_1,\dots, x_{k-1}\in\R$, there exists a monotone real
  circuit~$\tilde C$ with~$k-1$ inputs
  which decides
  whether there exist integral values~$x_k$,~$x'_k \in \Z$
  with $x_k+x'_k =\kappa$, such that
  $C_1(x_1,\dots,x_k) = C_2(x_1,\dots,x_{k-1},x'_k) = 1$ of size
  $\card{C_1} \cdot \lceil \log(L +1)\rceil + \card{C_2}$,
  where~$L \coloneqq \Lambda_{\max}-\Lambda_{\min}$.
\end{cor}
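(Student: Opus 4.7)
The plan is to reduce the corollary to Lemma~\ref{lem:obl_bin} followed by a single invocation of $C_2$. Setting $q \coloneqq \lceil \log(L+1) \rceil$, I would first apply the Oblivious Binary Search Lemma to $C_1$, obtaining a monotone real circuit $\tilde C_1$ of size $\card{C_1} \cdot q$ with $k-1$ inputs that computes
\[
  b(x_1,\dots,x_{k-1}) = \max\bigl\{\lambda \in \{0,\dots,2^q-1\} \suchthat C_1(x_1,\dots,x_{k-1},\Lambda_{\max}-\lambda) = 1 \bigr\}.
\]
Write $x_k^* \coloneqq \Lambda_{\max}-b$; this is the smallest integer in the search range $\{\Lambda_{\max}-(2^q-1),\dots,\Lambda_{\max}\}$ at which $C_1$ accepts. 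Because $2^q \geq L+1$, this range contains $\{\Lambda_{\min},\dots,\Lambda_{\max}\}$.

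Next, I would post-compose the output gate of $\tilde C_1$ with the non-decreasing affine map $b \mapsto b + (\kappa - \Lambda_{\max})$, which introduces no additional gates and makes the circuit output $\kappa - x_k^*$. Finally, I would feed $x_1,\dots,x_{k-1}$ together with this shifted output into $C_2$, defining $\tilde C(x_1,\dots,x_{k-1}) \coloneqq C_2(x_1,\dots,x_{k-1},\kappa - x_k^*)$. Since every constituent is monotone and the connecting map is non-decreasing, $\tilde C$ is a monotone real circuit of the claimed size $\card{C_1}\cdot\lceil\log(L+1)\rceil + \card{C_2}$.

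For correctness, note that if $\tilde C$ outputs $1$ then $C_1(x_1,\dots,x_{k-1},x_k^*)=1$ by definition of $b$ and $C_2(x_1,\dots,x_{k-1},\kappa-x_k^*)=1$ by construction, so $(x_k^*,\kappa-x_k^*)$ is an integer witness. Conversely, assuming some integer witness $(x_k,x_k')$ exists, I would split into two cases. If $x_k^* \leq \Lambda_{\min}$, then $\kappa - x_k^* \geq \kappa - \Lambda_{\min}$ and the hypothesis $C_2(\dots,\kappa-\Lambda_{\min})=1$ combined with monotonicity of $C_2$ forces $\tilde C = 1$. Otherwise $x_k^* > \Lambda_{\min}$, which via $b < L \leq 2^q-1$ and maximality of $b$ gives $C_1(\dots,x_k^*-1)=0$; monotonicity of $C_1$ then rules out any integer $x_k<x_k^*$ as a witness, so the witness must satisfy $x_k\geq x_k^*$, hence $\kappa-x_k\leq \kappa-x_k^*$ and monotonicity of $C_2$ propagates $C_2(\dots,\kappa-x_k)=1$ to $C_2(\dots,\kappa-x_k^*)=1$.

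The main subtlety is justifying that one query of $C_2$ at the single candidate $\kappa - x_k^*$ suffices: this relies crucially on the monotonicity of $C_1$ and $C_2$ (so each has a threshold-style accepting set in its last coordinate) together with the two boundary hypotheses, which guarantee that any solution lying outside the search range is already witnessed at the endpoint $\Lambda_{\min}$. It is precisely this asymmetry between the two circuits that lets the size of $\tilde C$ grow only additively in $\card{C_2}$, which will be essential for the balancing step underlying Theorem~\ref{thm:qfeas_real_monotone_interpolation}.
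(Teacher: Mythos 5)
Your proposal is correct and follows essentially the same route as the paper's proof: choose $q=\lceil\log(L+1)\rceil$, apply Lemma~\ref{lem:obl_bin} to $C_1$ to compute $b$, and then make a single call to $C_2$ on $\kappa-(\Lambda_{\max}-b)$, yielding the size bound $\card{C_1}\cdot\lceil\log(L+1)\rceil+\card{C_2}$. Your correctness argument merely spells out (via the case split on whether $\Lambda_{\max}-b\leq\Lambda_{\min}$) the monotonicity/boundary details that the paper leaves implicit.
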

Note that above lemma and its corollary are the reason we consider
monotone real circuit complexity. We do not see a way in which binary
monotone circuits can achieve a similar result (when the inputs
are provided in binary encoding).

Finally, for the proof of
Theorem~\ref{thm:qfeas_real_monotone_interpolation}, we will construct
a monotone real circuit which on input~$z\in Z_1\cup Z_2$ decides
whether after fixing the variables~$z$ in~(\ref{eq_interpol_templ}),
there exists a branch-and-bound tree~$T^{P(z)}$ for
$P(z) = {\{x \in \{0,1\}^{n_1} ~|~Ax\leq a-Cz\}}$ as in
Lemma~\ref{lem:conformal_cert_interpolation} by efficiently searching
the space of possible right-hand-sides using
Corollary~\ref{cor:circ_bal}.  Clearly, if such right-hand-sides
exist, we have~$z\in Z_2$ and~$z\in Z_1$ otherwise.
\section{Proofs}
\label{sec:proofs}

\subsection{Proof of Lemmas~\ref{lem:conformal_interpolation} and~\ref{lem:conformal_cert_interpolation}}

To showcase our technique, we begin by showing Lemma~\ref{lem:conformal_interpolation}:

\begin{proof}[Proof of Lemma~\ref{lem:conformal_interpolation}]
  Let~$P \define \{x~|~Ax\leq a\}$
  and~$Q \define \{y~|~By\leq b\}$.  We will proceed by induction
  on~$\card T$. In case~$\card T =1$, we have $P\times Q = \emptyset$,
  i.e., $P =\emptyset$ or~$Q=\emptyset$, say~$P =\emptyset$.  Then
  there exists a branch-and-bound tree~$T'$ for~$P$
  with~$\card{T'} = 1$ which conforms to~$T$, since their common underlying
  directed graph does not contain any internal nodes.
  
  Assume~$\card T > 1$ and
  that~$\alpha^\top x+\beta^\top y\leq \delta \lor \alpha^\top x+\beta^\top y\geq
  \delta+1$ is the topmost disjunction of~$T$.  Let~$N_\leq$ denote
  the~$\leq$-child of the root of~$T$ and~$T(N_\leq)$ the subtree
  of~$T$ rooted at~$N_\leq$. For any~$\gamma \in \Z$ the
  subtree~$T(N_\leq)$ is a valid branch-and-bound tree
  for
  \[
    (P\times Q)_{\leq \gamma} \coloneqq (P\cap \{\alpha^\top x\leq
    \delta +\gamma\}) \times (Q\cap \{\beta^\top y\leq -\gamma\}),
  \]
  since the two added constraints imply~$\alpha^\top x+\beta^\top y\leq \delta$.

  The induction hypothesis then implies that either \ref{case:confP} or \ref{case:confQ} holds for~$T(N_\leq)$ and~$(P\times Q)_{\leq \gamma}$.
  This allows us to define~$(\mu_\leq(\gamma))_{\gamma \in \Z}$ by
  \begin{align*}
  \mu_\leq(\gamma) \coloneqq
  \begin{cases}
      -1, & \text{if case~\ref{case:confP} holds for~$T(N_\leq)$ and~$(P\times Q)_{\leq \gamma}$, but case~\ref{case:confQ} does not,}\\
      0, & \text{if cases~\ref{case:confP} and~\ref{case:confQ} both hold for~$T(N_\leq)$ and~$(P\times Q)_{\leq \gamma}$,} \\
      1, & \text{if case~\ref{case:confQ} holds for~$T(N_\leq)$ and~$(P\times Q)_{\leq \gamma}$, but case~\ref{case:confP} does not.}\\
  \end{cases}
  \end{align*}

  Indeed, by the induction hypothesis, we have defined~$\mu_\leq$ for all possible cases.
  It is easy to see that~$\mu_\leq$ is non-decreasing. Moreover,~$\mu_\leq$ is neither identically~$-1$~nor identically~$1$, since sufficiently extreme values of~$\gamma$ can render both~$(P \cap \{\alpha^\top x\leq \delta+\gamma\})$ and~$(Q\cap \{\beta^\top y\leq -\gamma\})$ empty (and hence any branch-and-bound tree is valid for them).
  
  Similarly, we let $N_\geq$ denote
  the~$\geq$-child of the root of~$T$ and~$T(N_\geq)$ the subtree
  of~$T$ rooted at~$N_\geq$. We define~$(P\times Q)_{\geq \gamma} \coloneqq (P\cap \{\alpha^\top x\geq \delta +\gamma +1\}) \times (Q\cap \{\beta^\top y\geq -\gamma \})$ and then~$(\mu_\geq(\gamma))_{\gamma \in \Z}$ by
  \begin{align*}
  \mu_\geq(\gamma) \coloneqq
  \begin{cases}
      -1, & \text{if case~\ref{case:confP} holds for~$T(N_\geq)$ and~$(P\times Q)_{\geq \gamma}$, but case~\ref{case:confQ} does not,}\\
      0, & \text{if cases~\ref{case:confP} and~\ref{case:confQ} both hold for~$T(N_\geq)$ and~$(P\times Q)_{\geq \gamma}$,} \\
      1, & \text{if case~\ref{case:confQ} holds for~$T(N_\geq)$ and~$(P\times Q)_{\geq \gamma}$, but case~\ref{case:confP} does not.}\\
  \end{cases}
  \end{align*}
  Note that~$\mu_\geq(\gamma)$ is non-increasing and neither
  identically~$-1$~nor identically~$1$. We remark that this definition
  is not symmetric in~$P$ and~$Q$ (the `$+1$' goes with~$P$).
  
  The noted properties of~$\mu_{\leq}$ and~$\mu_\geq$ imply that at
  least one of the following cases hold: Either (i) there
  is~$\gamma \in \Z$, such
  that~$\mu_\leq(\gamma)\leq 0$ and~$\mu_\geq(\gamma) \leq 0$, or (ii) there
  is~$\gamma\in\Z$, such
  that~$\mu_\leq(\gamma) = \mu_\geq(\gamma-1) =1$.

  If there exists~$\gamma$ as in (i), we construct~$T^P$ as
  desired by branching
  on~$\alpha^\top x\leq \delta + \gamma \lor \alpha^\top x\geq \delta +
  \gamma+1$ and attaching to the resulting children the trees
  for~$(P \cap \{\alpha^\top x\leq \delta+\gamma\})$
  and~$(P \cap \{\alpha^\top x\geq \delta+\gamma+1\})$ conforming
  to~$T(N_\leq)$ and~$T(N_\geq)$ for which existence is guaranteed
  by~$\mu_\leq(\gamma)\leq 0$ and $\mu_\geq(\gamma) \leq 0$.

  Otherwise, there exists~$\gamma$ as in (ii) and we
  construct~$T^Q$ as desired by branching
  on~$\beta^\top y \leq -\gamma \lor \beta^\top y\geq -\gamma+1$ and attaching
  to the resulting children the trees
  for~$ (Q\cap \{\beta^\top y\leq -\gamma\})$
  and~$(Q\cap \{\beta^\top y\geq -\gamma+1\})$ conforming to~$T(N_\leq)$
  and~$T(N_\geq)$ for which existence is guaranteed
  by~$\mu_\leq(\gamma) = \mu_\geq(\gamma-1) =1$.
\end{proof}

The proof of Lemma~\ref{lem:conformal_cert_interpolation} follows the
same arguments as Lemma~\ref{lem:conformal_interpolation}, but we now
need to track that the condition on the quasi-Farkas certificates
holds as well.

\begin{proof}[Proof of Lemma~\ref{lem:conformal_cert_interpolation}]
  Let~$P \define \{x~|~Ax\leq a\}$ and~$Q \define \{y~|~By\leq b\}$, i.e.,
  \[P\times Q = \left\{
    \begin{pmatrix}
     x\\ y
   \end{pmatrix}~\Big|~
  \begin{pmatrix}
     A & 0\\ 0 & B
   \end{pmatrix} 
    \begin{pmatrix}
     x\\ y
   \end{pmatrix}\leq   
    \begin{pmatrix}
     a\\ b
   \end{pmatrix}\right\}.\]
We will proceed by induction on~$\card T$.  If we have~$\card T =1$,
then the root~$r$ of~$T$ is the unique node of~$T$ and~$T$ does not
branch on any disjunctions.  Hence, there is a valid
Farkas-certificate~$f^r$ attached to~$r$. Let~$f^r_P$ denote the
projection of~$f^r$ onto constraints belonging to~$P$ and~$f^r_Q$ the
projection of~$f^r$ onto constraints belonging to~$Q$.  We then
have
\[
  \begin{pmatrix}
    {f^r_P} \\ {f^r_Q}
  \end{pmatrix}^\top
  \begin{pmatrix}
    A & 0\\ 0 & B
  \end{pmatrix}
  = 0, \quad
  \begin{pmatrix}
    {f^r_P} \\ {f^r_Q}
  \end{pmatrix}^\top
  \begin{pmatrix}
    a \\ b
  \end{pmatrix} = (f^r_P)^\top a + (f^r_Q)^\top b < 0.
\]
It follows that $(f^r_P)^\top A = 0$, $(f^r_Q)^\top B = 0$, and
that $(f^r_P)^\top a < 0$ or $(f^r_Q)^\top b< 0$. Hence,~${f^r_P}$
   is a Farkas-certificate for the infeasibility of~$P$ or~$f^r_Q$ is
   a Farkas-certificate for the infeasibility of~$Q$.  Thus, the
   branch-and-bound tree with a single leaf and certificate~${f^r_P}$
   for this leaf is a quasi-certified branch-and-bound tree for~$P$
   relative to~$U_P$ conforming to~$T$ or the branch-and-bound tree
   with a single leaf and certificate~${f^r_Q}$ is a quasi-certified
   branch-and-bound tree for~$Q$ relative for~$U_Q$ conforming to~$T$.
   
   Assume~$\card T >1$ and that
   $\alpha^\top x+\beta^\top y\leq \delta \lor \alpha^\top x+\beta^\top y\geq \delta+1$
   is the topmost disjunction of~$T$.  Let~$N_\leq$ denote the
   $\leq$-child of the root of~$T$ and~$T(N_\leq)$ the subtree of~$T$
   rooted at~$N_\leq$. For any~$\gamma \in \Z$ the subtree~$T(N_\leq)$
   is a valid branch-and-bound tree for
   \[
     (P\times Q)_{\leq \gamma} \define (P\cap \{\alpha^\top x\leq \delta
     +\gamma\}) \times (Q\cap \{\beta^\top y\leq -\gamma\}),
   \]
   since the two
   added constraints imply $\alpha^\top x+\beta^\top y\leq \delta$.
   In order to use the induction hypothesis, we must turn~$T(N_\leq)$
   into a quasi-certified branch-and-bound tree for
   $(P\times Q)_{\leq \gamma}$ relative to~$U$. For every leaf~$L$ in~$T$
   which is a descendant of~$N_\leq$, there is a
   quasi-Farkas-certificate~$f^L$ for the associated subproblem
   $T_L(P\times Q)$.  To obtain the problem
   $T(N_\leq)_L((P\times Q)_{\leq \gamma})$ associated to~$L$ as a
   leaf of the branch-and-bound tree~$T(N_\leq)$ for
   $(P\times Q)_{\leq \gamma}$, we have to replace the constraint
   $\alpha^\top x+\beta^\top y\leq \delta$, which we denote by~$\eta$, by
   $\alpha^\top x\leq \delta + \gamma$ and $\beta^\top y\leq -\gamma$, which we
   denote by~$\eta_P$ and~$\eta_Q$, respectively.  We define
   $\tilde f^L$ indexed by the constraints of $T(N_\leq)_L((P\times Q)_{\leq \gamma})$ by
   \[
    \tilde f^L_\nu = 
    \begin{cases}
      f^L_\nu, & \text{if $\nu \not\in \{\eta_P,\eta_Q\}$,} \\
      f^L_\eta, & \text{if $\nu \in \{\eta_P,\eta_Q\}$.}
    \end{cases}
  \]

  We claim that~$\tilde f^L$ is a Farkas-certificate for~$T(N_\leq)_L((P\times Q)_{\leq \gamma})$,
  if~$f^L$ is one for $T_L(P\times Q)$.
  Indeed, let us consider the system~$M
  \binom{x}{y} \leq m$, with
  \[
    M\coloneqq
  \begin{pmatrix}
    A & 0 \\
    0 & B \\
    E_1 &E_2\\
  \end{pmatrix}
  \text{ and }
  m \coloneqq
  \begin{pmatrix}
    a \\ b \\ e\\
  \end{pmatrix},
  \]
such that with~$E \define [E_1, E_2]$
we have that $E \binom{x}{y} \leq e$ are the constraints of $T_L(P\times Q)$ which come from
branching decisions in~$T$ and additionally~$\eta_P$ and
$\eta_Q$. Moreover, extend \smash{$f^L$} and \smash{$\tilde f^L$} with zeros, so
that the following computations are well-defined:
\begin{align*}
  (f^L-{\tilde f}^L)^\top M &= f^L_\eta M_{\eta} - {\tilde f}_{\eta_P}^L M_{\eta_P} -{\tilde f}_{\eta_Q}^L M_{\eta_Q} \\
  &= f^L_\eta M_{\eta} - f_{\eta}^L M_{\eta_P} -f_{\eta}^L M_{\eta_Q} \\&= f^L_\eta (M_\eta- M_{\eta_P} -M_{\eta_Q}) = f^L_\eta\cdot 0 = 0     
\end{align*}
and
\begin{align*}
  (f^L-{\tilde f}^L)^\top m &= f^L_\eta m_{\eta} - {\tilde f}_{\eta_P}^L m_{\eta_P} -{\tilde f}_{\eta_Q}^L m_{\eta_Q} \\
  &= f^L_\eta m_{\eta} - f_{\eta}^L m_{\eta_P} -f_{\eta}^L m_{\eta_Q} = 0.
\end{align*}
Hence, we have
\begin{align*}
  ({\tilde f}^L)^\top M = ({f}^L)^\top M = 0 \text{ and } ({\tilde f}^L)^\top m = ({f}^L)^\top m < 0.
\end{align*}
Thus,~$\tilde f^L$ is a valid Farkas-certificate for the infeasibility
of $T(N_\leq)_L((P\times Q)_{\leq \gamma})$, if~$f^L$ is one for
$T_L(P\times Q)$.  Furthermore, note that if there is an edge~$e$ in
the path from the root to~$L$ in~$T$ labeled with an inequality
$\tilde{\alpha}^\top x + \tilde{\beta}^\top y \leq \tilde \delta $, such that
$U \cap \{\tilde{\alpha}^\top x + \tilde{\beta}^\top y \leq \tilde \delta \} =
\emptyset$ and~$e$ is not the edge between the root and~$N_\leq$, then
$e$ is also contained in the path between the root and~$L$ in
$T(N_\leq)$.

Hence, if
$U \cap \{\alpha^\top x + \beta^\top y \leq \delta \} \neq \emptyset$, then
$T(N_\leq)$ with the quasi-Farkas-certificates constructed above is
indeed a valid quasi-certified branch-and-bound tree for
$(P\times Q)_{\leq\gamma}$ relative to~$U$ and thus, the induction
hypothesis implies that~\ref{case:cert_confP} or~\ref{case:cert_confQ} holds for
$T(N_\leq)$ as a branch-and-bound tree for $(P\times Q)_{\leq \gamma}$
relative to~$U$.  Moreover, if
$U \cap \{\alpha^\top x + \beta^\top y \leq \delta\} = \emptyset$, then we have
$U_P \cap \{\alpha^\top x \leq \delta +\gamma\} = \emptyset$ or
$U_Q \cap \{\beta^\top y \leq -\gamma\} = \emptyset$.

Define the following cases:

  \begin{enumerate}[label=(\alph*$^\prime$)]
  \item \label{case:confP'} Case~\ref{case:cert_confP} holds for~$T(N_\leq)$ and~$(P\times Q)_{\leq \gamma}$ or $U_P \cap \{\alpha^\top x \leq \delta +\gamma\} = \emptyset$.
   \item \label{case:confQ'} Case~\ref{case:cert_confQ} holds for~$T(N_\leq)$ and~$(P\times Q)_{\leq \gamma}$ or $U_Q \cap \{\beta^\top y \leq -\gamma\} = \emptyset$.
   \end{enumerate}

  This allows us to define $(\mu_\leq(\gamma))_{\gamma \in \Z}$ by
  \begin{align*}
  \mu_\leq(\gamma) \coloneqq
  \begin{cases}
      -1, & \text{if case~\ref{case:confP'} holds,  but case~\ref{case:confQ'} does not,}\\
      0, & \text{if cases~\ref{case:confP'} and~\ref{case:confQ'} both hold,} \\
      1, & \text{if case~\ref{case:confQ'} holds, but case~\ref{case:confP'} does not.}\\
  \end{cases}
  \end{align*}

  Indeed, by the induction hypothesis and our previous considerations,
  we have defined~$\mu_\leq$ for all possible cases.  It is easy to
  see that~$\mu_\leq$ is non-decreasing. Moreover,~$\mu_\leq$ is
  neither identically~$-1$ nor identically~$1$, since sufficiently
  extreme values of~$\gamma$ can render both
  $(U_P \cap \{\alpha^\top x\leq \delta+\gamma\})$ and
  $(U_Q\cap \{\beta^\top y\leq -\gamma\})$ empty.
  
  Similarly, one defines
  $(P\times Q)_{\geq \gamma} \coloneqq (P\cap \{\alpha^\top x\geq \delta
  +\gamma +1\}) \times (Q\cap \{\beta^\top y\geq -\gamma \})$ and then the
  following cases:
  \begin{enumerate}[label=(\alph*$^{\prime\prime}$)]
  \item \label{case:confP''} Case~\ref{case:cert_confP} holds for~$T(N_\geq)$ and~$(P\times Q)_{\geq \gamma}$ or $U_P \cap \{\alpha^\top x \geq \delta +\gamma +1\} = \emptyset$.
   \item \label{case:confQ''} Case~\ref{case:cert_confQ} holds for~$T(N_\geq)$ and~$(P\times Q)_{\geq \gamma}$ or $U_Q \cap \{\beta^\top y \geq -\gamma \} = \emptyset$,
   \end{enumerate}
and finally  $(\mu_\geq(\gamma))_{\gamma \in \Z}$ by
 \begin{align*}
  \mu_\geq(\gamma) \coloneqq
  \begin{cases}
    -1, & \text{if case~\ref{case:confP''} holds, but case~\ref{case:confQ''} does not,}\\
    0, & \text{if cases~\ref{case:confP''} and~\ref{case:confQ''} both hold,} \\
    1, & \text{if case~\ref{case:confQ''} holds, but case~\ref{case:confP''} does not.}\\
    \end{cases}
 \end{align*}
 One checks easily
 that~$\mu_\geq(\gamma)$ is non-increasing and neither identically~$-1$
 nor identically~$1$.
  
  The noted properties of~$\mu_{\leq}$ and~$\mu_\geq$ imply that at
  least one of the following cases hold: (i) There is~$\gamma \in \Z$,
  such that~$\mu_\leq(\gamma)\leq 0$ and~$\mu_\geq(\gamma) \leq 0$, or (ii)
  there is~$\gamma\in\Z$, such that
  $\mu_\leq(\gamma) = \mu_\geq(\gamma-1) =1$.

  If there exists~$\gamma$ as in case (i), i.e., cases~\ref{case:confP'} and~\ref{case:confP''} hold for this~$\gamma$, we construct~$T^P$ as
  desired by branching on
  $\alpha^\top x\leq \delta + \gamma \lor \alpha^\top x\geq \delta +
  \gamma+1$ and attaching to the resulting children~$N_\leq$
  and~$N_\geq$ the following trees: If~\ref{case:cert_confP} holds for~$T(N_\leq)$
  and $(P\times Q)_{\leq \gamma}$, we attach the
  quasi-certified branch-and-bound tree~$T(N_\leq)^P$ for
  $P\cap\{\alpha^\top x \leq \delta +\gamma\}$ relative to~$U_P$
  conforming to~$T(N_\leq)$ given by~\ref{case:cert_confP}.  While doing so,
  we can keep the quasi-Farkas-certificates attached to leaves
  in~$T(N_\leq)^P$. For this, note
  that the set of constraints describing the problem
  $(T(N_\leq)^P)_L(P\cap\{\alpha^\top x \leq \gamma+\delta\})$
  associated to a
  leaf~$L$ in the branch-and-bound tree~$T(N_\leq)^P$ for
  $P\cap\{\alpha^\top x \leq \gamma+\delta\}$ is identical to the
  constraints describing the problem~$(T^P)_L(P)$ associated to~$L$ as
  a leaf of the branch-and-bound tree~$T^P$ for~$P$. If~\ref{case:cert_confP}
  does not hold for~$T(N_\leq)$ and $(P\times Q)_{\leq \gamma}$, we have
  $U_P \cap \{\alpha^\top x \leq \delta +\gamma\} = \emptyset$, and we
  attach an arbitrary, not necessarily valid quasi-certified
  branch-and-bound tree conforming to~$T(N_\leq)$.

  Similarly, if~\ref{case:cert_confP} holds for~$T(N_\geq)$ and
  $(P\times Q)_{\geq \gamma}$, then we attach to~$N_\geq$ the
  quasi-certified branch-and-bound tree~$T(N_\geq)^P$ relative
  to~$U_P$ conforming to~$T(N_\geq)$ given by~\ref{case:cert_confP}. Otherwise, we have
  $U_P \cap \{\alpha^\top x \geq \delta +\gamma\ +1\} = \emptyset$,
  and we attach an arbitrary, not necessarily valid quasi-certified branch-and-bound
  tree for~$P$ conforming to~$T(N_\geq)$. Choose
  quasi-Farkas-certificates as in the previous case.

  It is then easy to see that~$T^P$ is conforming to~$T$.  It remains
  to check that~$T^P$ is a valid quasi-certified branch-and-bound tree
  for~$P$ relative to~$U_P$. For this consider a leaf~$L$ of~$T^P$ in
  the subtree rooted at~$N_\leq$. If we have
  $U_P \cap \{\alpha^\top x \leq \delta +\gamma\} = \emptyset$, there
  is nothing to check for~$f^L$.  Similarly, if there is an edge in~$T(N_\leq)^P$
  on the path from the root to~$L$ labeled with an
  inequality $\tilde \alpha^\top x\leq \tilde \delta$, such that
  $U_P \cap \{\tilde \alpha^\top x \leq\tilde \delta\} = \emptyset$,
  then the same inequality appears on the path from the root to~$L$ in~$T^P$.
  The only remaining case is that the quasi-Farkas-certificate~$f^L$
  attached to~$L$ in~$T(N_\leq)^P$ is a Farkas-certificate for the associated problem
  $(T(N_\leq)^P)_L(P\cap \{\alpha^\top x\leq \delta+\gamma\})$. But then~$f^L$
  is also a Farkas-certificate for the problem~$(T^P)_L(P)$
  associated to~$L$ in~$T^P$, which is the same problem.  For leaves
  in the subtree rooted at~$N_\geq$ we proceed similarly.

  In case (ii) we proceed analogously.
\end{proof}

\subsection{Proofs of Lemma~\ref{lem:obl_bin} and Corollary~\ref{cor:circ_bal}}

\begin{proof}[Proof of Lemma~\ref{lem:obl_bin}]
  For ease of notation, we write $p \coloneqq q-1$ instead.  Thus we have to compute
     \[b \colon (x_1,\dots,x_{k-1}) \mapsto \max \big\{\lambda\in \{0,\dots,2^{p+1}-1\}~|~ \text{$C(x_1,\dots,x_{k-1},\Lambda_{\max} -\lambda) = 1$}\big\}.\]

     We will construct a monotone real circuit~$\tilde C$ of the
     desired size which works in~$p+1$ phases~$0,\dots,p$.  For each
     phase~$i$, there will be a gate~$h_i$ in~$\tilde C$ representing
     the state of computation after phase~$i$.  The gate~$h_i$ will
     compute the function
     \[
       b_i(x) \define \lfloor b(x) \rfloor_{2^{p-i}},
     \]
     where $x = (x_1,\dots,x_{k-1})$ and~$\lfloor \cdot \rfloor_{2^{p-i}}$ denotes rounding down to the nearest integer divisible by~$2^{p-i}$.
     Clearly, this
     suffices, since~$h_p$ then computes the desired
     function. Moreover,~$b_0$ can be computed by a copy~$C_0$ of~$C$,
     which receives~$(x,\Lambda_{\max}-2^p )$ where the output gate~$h_0$
     is modified to compute~$b_0(x) = 2^{p}C(x,\Lambda_{\max}-2^p )$.

  To construct the part of~$\tilde C$ belonging to phase~$i > 0$, we will rely on the recurrence
  \[
    b_{i}(x) = b_{i-1}(x) + 2^{p-i}C(x,\Lambda_{\max}-b_{i-1}(x)-2^{p-i}).
  \]
  Unfortunately, this recurrence is not necessarily monotone in~$x$ and~$b_{i-1}$ due to the sign on the second occurrence of~$b_{i-1}(x)$. 
  However, since it is immediate from the definition that~$b_{i-1}$ is divisible by~$2^{p-(i-1)}$, we might as well consider the recurrence
  \begin{equation}\label{eq:rec_bi}
    b_{i} = \lfloor b_{i-1}\rfloor_{2^{p-(i-1)}} + 2^{p-i}C(x,\Lambda_{\max}-\lfloor b_{i-1} \rfloor_{2^{p-(i-1)}} -2^{p-i}).
  \end{equation}
  Note that we drop the dependence of~$b_i$ and~$b_{i-1}$ on~$x$ to improve readability.
  
Unfortunately for us, the latter recurrence -- while clearly monotone in~$x$
and~$b_{i-1}$
 -- still does not provide an obvious monotone real circuit,
since the last summand applies a non-monotone function to~$b_{i-1}$.

Thus, we have to give a version of~$C$, which passes along the old
bound~$b_{i-1}$ from the~$k$-th input gate to the output gate in the
higher order bits, together with how~$C$ behaves on input
$(x,\Lambda_{\max}-b_{i-1}-2^{p-i})$ in the lower order bits, in order
to make our computation monotone.

We begin by assuming that every non-input gate in~$C$ applies a
function~$f$ with $\range f\subseteq (0,\tfrac{1}{2})$. This can be achieved by
post-composing the function applied at every gate with the monotone bijection
\[\varphi \colon \R \rightarrow (0,\tfrac{1}{2}), \quad y\mapsto (\arctan(y)
  + \tfrac{\pi}{2})/2\pi\]
and pre-composing every function applied at a gate,
which takes as input such a modified gate, with~$\varphi^{-1}$ for the
respective input.
Let~$C'$ be the monotone real circuit obtained from~$C$ this way. We
then have $C'(x) = \varphi(C(x))$.

Next, we introduce a gate~$\tilde g_k$, which provides both the~$k$-th
input (which is supposed to be~$b_{i-1}$) and the transformed value of
$\Lambda_{\max}- b_{i-1} -2^{p-i}$, i.e,
both inputs to~$\tilde g_k$ are the~$k$-th input gate~$g_k$ of~$C'$
and the function applied at~$\tilde g_k$ is
\[
  f^i(g_k,g_k) \coloneqq \lfloor g_k \rfloor_{2^{p-(i-1)}} +
  \varphi\big(\Lambda_{\max}-\lfloor g_k \rfloor_{2^{p-(i-1)}}
  -2^{p-i}\big).
\]
Then, for every gate~$g$ which uses~$g_k$ as input in
$C'$, we let~$g$ use~$\tilde g_k$ as input instead. We note~$f^i$ is
non-decreasing: If an increase of~$g_k$ would cause the summand
involving~$\varphi$ to decrease, then it decreases by at most
$\frac 1 2$, but the other summand then increases by at least~$1$.

Let~$S$ denote the set of gates in~$C'$ which are a descendant of
$g_k$ (hence now of~$\tilde g_k$) and consider a gate~$g$ in~$S$
with predecessors~$g_1$ and~$g_2$, such that~$g_1\in S$, but
$g_2\not\in S$. We then replace the function~$f_g(g_1,g_2)$ applied at
$g$ by the function
\begin{align*}
  f^i_g(g_1,g_2) \coloneqq& \lfloor g_1 \rfloor_{2^{p-(i-1)}} +
  f_g\big(\{g_1\}_{2^{p-(i-1)}},\,g_2\big),
\end{align*}
where $\{g_1\}_{2^{p-(i-1)}} \coloneqq g_1 -\lfloor g_1 \rfloor_{2^{p-(i-1)}}$. We check that~$f^i_g$ is non-decreasing: If an increase of~$g_1$ would cause~$\{g_1\}$ to decrease, then $f_g(\{g_1\}_{2^{p-(i-1)}},\,g_2)$ decreases by at most~$\tfrac{1}{2}$, since $\range f_g \subseteq (0,\tfrac{1}{2})$, while~$\lfloor g_1 \rfloor_{2^{p-(i-1)}}$ increases by at least~$1$.

If~$g$ is a gate with both predecessor~$g_1$ and~$g_2$ in~$S$, we replace the function~$f_g$ applied at~$g$ by
\begin{align*}
   f^i_g(g_1,g_2) \coloneqq
  (\lfloor g_1 \rfloor_{2^{p-(i-1)}} +\lfloor g_2 \rfloor_{2^{p-(i-1)}})/2 
  +
    f_g\big(\{g_1\}_{2^{p-(i-1)}},\,\{g_2\}_{2^{p-(i-1)}}\big),
\end{align*}
Once again,~$f_g^i$ is non-decreasing by an analogous argument.

Let~$C_i$ denote the monotone real circuit obtained by applying these
modifications to~$C'$.  For a gate~$g'$ in~$C'$, let~$g^i$ denote the
corresponding gate of~$C_i$.  It is then easy to show by induction
along a topological order on the gates of~$C'$ (or equivalently~$C_i$)
that
\[
  g^i(x,b_{i-1}) =
  \begin{cases}
    g'\big(x,\,\Lambda_{\max}- b_{i-1} -2^{p-i}\big), &\text{if $g\not\in S$},\\
    b_{i-1}+ g'\big(x,\,\Lambda_{\max}- b_{i-1} -2^{p-i}\big)  &\text{if $g\in S$.}\\
  \end{cases}
\]
We note that this holds for the input gates $g_1,\dots g_{k-1}$ and
$\tilde g_k$. For a non-input gate~$g'$ in $C'$,
consider for example the case where $g$ has predecessors~$g'_1$ and
$g'_2$, such that~$g'_1\in S$ and~$g'_2\not\in S$. Then a straight-forward
computation yields
\begin{align*}
  g^i(x,b_{i-1}) &=  f^i_g\big(g_1^i(x,b_{i-1}), g_2^i(x,b_{i-1})\big) \\
                 &= \lfloor g_1^i(x,b_{i-1})\rfloor_{2^{p-(1-i)}} + f_g\big(\{g_1^i(x,b_{i-1})\}_{2^{p-(1-i)}},\,g^i_2(x,b_{i-1})\big) \\
                 &= \lfloor b_{i-1}+ g_1'\big(x,\,\Lambda_{\max}- b_{i-1} -2^{p-i}\big) \rfloor_{2^{p-(1-i)}} \\
                 & \quad + f_g\big(\{ b_{i-1}+ g_1'(x,\,\Lambda_{\max}-b_{i-1}-2^{p-i})\}_{2^{p-(i-1)}},\, g_2'(x,\Lambda_{\max}-b_{i-1} -2^{p-i})\big) \\
                 &= b_{i-1}+ f_g\big( g_1'(x,\,\Lambda_{\max}- b_{i-1}  -2^{p-i}),\, g_2'(x,\Lambda_{\max}-b_{i-1} -2^{p-i})\big) \\
                  &= b_{i-1}  + g'\big(x,\,\Lambda_{\max}- b_{i-1}  -2^{p-i}\big)
\end{align*}
as desired, where the first and last identity are due to the definition of the computed function,
the second due to the definition of~$f_g^i$, the third due to the induction hypothesis and the fourth
due to $b_{i-1} = \lfloor b_{i-1} \rfloor_{2^{p-(i-1)}}$ and $\range g \in(0,\frac 1 2)$.  The other cases are analogous.

By considering this identity for the output gate~$h^i$ of~$C_i$ (note
that we can assume to be in the second case), we obtain
\begin{align*}
  C_i(x,b_{i-1}) &=  b_{i-(i-1)}+ C'(x,\Lambda_{\max}- b_{i-1} -2^{p-i}) \\
                 &= b_{i-(i-1)}+ \varphi (C(x,\Lambda_{\max}- b_{i-1} -2^{p-i})). 
\end{align*}
Thus, by our recurrence~(\ref{eq:rec_bi}), if we
post-compose the function applied at~$h_i$ with
\[y \mapsto
  \begin{cases}
    \lfloor y\rfloor_{2^{p-(i-1)}} &\text{if $\{ y\}_{2^{p-(i-1)}} < \varphi(1)$}, \\
    \lfloor y\rfloor_{2^{p-(i-1)}}+2^{p-i} &\text{if $\{ y\}_{2^{p-(i-1)}} \geq \varphi(1)$}, \\
  \end{cases}
\]
we obtain a monotone real circuit which on input~$(x, b_{i-1})$
computes~$b_i$. For this, note that $\{C_i(x,b_{i-1})\}_{2^{p-(i-1)}}  = \varphi(C(x,\Lambda_{\max}- b_{i-1} -2^{p-i})) < \varphi(1)$ is
equivalent to $C(x,\Lambda_{\max}-b_{i-1}-2^{p-i}) = 0$.
Then~$\tilde C$ can be constructed in the obvious
way, i.e., by sequentially using the constructed circuits
$C_0,\dots,C_p$ to compute the values $b_0(x),\dots,b_p(x) = b(x)$.

For the size bound, observe that we have used~$q = p+1$ copies of~$C$
and that the introduced auxiliary gates~$\tilde g_k$ can be eliminated
from the circuit, since the functions applied at the children
of~$\tilde g_k$ can instead be pre-composed with the function applied
at~$\tilde g_k$. 
\end{proof}

\begin{proof}[Proof of Corollary~\ref{cor:circ_bal}]
 Choose $q \coloneqq \lceil\log(L+1)\rceil $ and 
apply Lemma~\ref{lem:obl_bin} to~$C_1$ to construct~$\tilde C_1$, such that the output gate~$h$ of~$\tilde C_1$ computes

\[b \colon (x_1,\dots,x_{k-1}) \mapsto \max \big\{\lambda\in \{0,\dots,2^{q}-1\}~\big|~ \text{$C(x_1,\dots,\Lambda_{\max} -\lambda) = 1$}\big\}.\]
and use a copy of~$C_2$ to compute $C_2(x_1,\dots, x_{k-1}, \kappa - (\Lambda_{\max} - b(x_1,\dots,x_{k-1}))) $.

Clearly, if the output gate of this copy of~$C_2$ computes~$1$,
then $x_k= \Lambda_{\max}- b(x_1,\dots,x_{k-1})$ and $x'_k= \kappa - x_k$ satisfy $C_1(x_1,\dots,x_{k-1}, x_k) = C_2(x_1,\dots,x_{k-1}, x'_k) = 1$ and $x_k+x'_k = \kappa$. Otherwise, there are no possible such choices for~$x_k$ and~$x_k'$, since for $x_k < \Lambda_{\max}- b(x_1,\dots,x_{k-1})$ we have $C_1(x_1,\dots,x_{k-1}, x_k)=0$ and for $x_k \geq \Lambda_{\max}- b(x_1,\dots,x_{k-1})$ and $x'_k= \kappa - x_k$ we have $C_2(x_1,\dots,x_{k-1}, x'_k)=0$.
Hence, the constructed circuit decides the question posed in the corollary. 
\end{proof}

\subsection{Proof of Theorem~\ref{thm:qfeas_real_monotone_interpolation}}

  For ease of notation, we assume that the variable bounds in~(\ref{eq_interpol_templ})
  are incorporated into the constraints. Then the LP-relaxation of~(\ref{eq_interpol_templ})
  is given by:
\begin{equation*}
    \begin{pmatrix}A\\0\end{pmatrix}x +
    \begin{pmatrix}0\\B\end{pmatrix}y +
    \begin{pmatrix}C\\D\end{pmatrix}z \leq \begin{pmatrix}a\\b\end{pmatrix},\qquad
\end{equation*}
The basic structure of the proof is as follows: Given
$z\in Z_1 \cup Z_2$, where
$Z_1 = \{z \in \{0,1\}^{n_3}~|~ \exists x\in \{0,1\}^{n_1}\colon Ax
\leq a- Cz\}$ and
$Z_2= \{z \in \{0,1\}^{n_3}~|~ \exists y \in \{0,1\}^{n_2}\colon By
\leq b- Dz\}$, and a branch-and-bound tree~$T$ for the
infeasibility of~(\ref{eq_interpol_templ}), we compute
Farkas-certificates for the leaves of~$T$ and then obtain a
certified branch-and-bound tree~$\tilde T$ for
$P(z) \times Q(z) \coloneqq \{Ax\leq a- Cz\}\times \{Bx\leq b-
Dz\}$ by plugging in the values for~$z$ in the disjunctions used in
$T$.  Then at least one of the alternatives in
Lemma~\ref{lem:conformal_cert_interpolation} holds. However, since
$z\in Z_1 \cup Z_2$, exactly one of~$P(z)$ and~$Q(z)$ is
integer-feasible, and thus at most one of the alternatives in
Lemma~\ref{lem:conformal_cert_interpolation} holds. Clearly, we have
$z\in Z_2$ if and only if there exists a quasi-certified
branch-and-bound tree~$(\tilde T)^P$ for~$P(z)$ conforming to
$\tilde T$.  Thus, if we construct a monotone real circuit~$C$ that
given values for the~$z$ variables decides whether there exists such a
tree~$(\tilde T)^P$, then~$C$ separates~$Z_1$ and~$Z_2$. We work out
the details below:

\begin{proof}[Proof of Theorem~\ref{thm:qfeas_real_monotone_interpolation}]
  We again assume that variable bounds in~(\ref{eq_interpol_templ})
  are incorporated into the constraints as above.
  We begin by applying Theorem~\ref{thm:coeff_size} to our
  branch-and-bound tree~$T$ for~(\ref{eq_interpol_templ}) to obtain a
  certified branch-and-bound tree~$T'$ for~(\ref{eq_interpol_templ})
  with bounded coefficients: Note that the linear
  programming relaxation of~(\ref{eq_interpol_templ}) is contained in
  the ball $B_1^n(n) = \{x\in \R^n~|~ \lVert x \rVert_1 \leq n$\}, where
  $n\coloneqq n_1+n_2+n_3$. Hence, we can assume that for every
  disjunction $d^\top w \leq \delta \lor d^\top w \geq \delta+1$ used
  in~$T'$, we have
  $\max\{\lVert d\rVert_\infty,\card \delta\} \leq (10n^2)^{(n+2)^2}$
  and moreover we have
  $\card{T'}\leq (4n+5)\card T$. Then we fix some Farkas-certificates
  for~$T'$ which thus becomes a certified branch-and-bound tree.

  By fixing the values of~$z$ in the disjunctions used in~$T'$, we
  obtain a certified branch-and-bound tree~$\tilde T$ for
  $P(z) \times Q(z) \coloneqq\{Ax \leq a- Cz\} \times \{By \leq
  b- Dz\}$. Since $P(z)\times Q(z) \subseteq [0,1]^{n_1} \times
  [0,1]^{n_2}$, we may also consider~$\tilde T$ as a
  quasi-certified branch-and-bound tree for $P(z) \times Q(z)$
  relative to $[0,1]^{n_1} \times [0,1]^{n_2}$.
  
  Let~$\NN(T')$ denote the set of internal nodes of~$T'$ and let
  $\gamma \in \Z^{\NN(T')}$.
  Consider the not necessarily valid quasi-certified
  branch-and-bound tree $\tilde T^P(\gamma)$ for~$P(z)$ relative to~$[0,1]^{n_1}$,
  which has the same underlying
  directed tree as~$\tilde T$, and at a node~$N$ branches on the
  disjunction
  $\alpha^\top_N x \leq \gamma_N \lor \alpha^\top_N x \geq \gamma_N +1$,
  when~$\tilde T$ branches at~$N$ on the disjunction
  $\alpha^\top_N x +\beta_N^\top y \leq \delta \lor \alpha^\top_N x +\beta_N^\top y \geq
  \delta_N+1$. Similarly, the
  Farkas-certificate at a leaf~$L$ of~$\tilde T^P(\gamma)$ is the
  Farkas-certificate at leaf~$L$ of~$\tilde T$ with the entries
  corresponding to constraints from~$Q(z)$ removed.  We are interested
  in whether there exists a choice for~$\gamma$ for which~$\tilde T^P(\gamma)$
  is a valid quasi-certified branch-and-bound tree for~$P(z)$ relative to~$[0,1]^{n_1}$.
 
  For any candidate disjunction
  $\alpha^\top_N x \leq \gamma_N \lor \alpha^\top_N x \geq \gamma_N+1$ to be
  used at a node~$N$ in~$\tilde T^P(\gamma)$, the slab
  $\{x\in \R^{n_1}\colon \gamma_N \leq \alpha^\top_N x \leq \gamma_N
  +1\}$ has width
  $\frac {1} {\lVert \alpha_N\rVert_2} \geq \frac {1}{\sqrt{n_1}\lVert
    \alpha_N\rVert_\infty}$.
  Since
  \[
    \max\Big\{\Big(\frac{\alpha_N}{\lVert \alpha_N\rVert_2}\Big)^\top x ~|~ x\in [0,1]^{n_1} \Big\}
    -\min\Big\{\Big(\frac{\alpha_N}{\lVert \alpha_N\rVert_2}\Big)^\top x ~|~ x\in [0,1]^{n_1}\Big\}\leq \sqrt n_1,
  \]
  we have that at most
  $\sqrt{n_1} \cdot \sqrt{n_1}\lVert \alpha_N\rVert_\infty +2 \leq
  n(10n^2)^{(n+2)^2}+2$ of our slabs intersect~$[0,1]^{n_1}$.  Let~$L_{\min}^N$ denote the maximal value for~$\gamma$ for which
  $[0,1]^{n_1} \cap \{\alpha_N^\top x \leq \gamma\} = \emptyset$
  (cf.\ Figure~\ref{fig:Lminmax}).  Similarly, let~$L_{\max}^N$
  denote the minimal~$\gamma$ for which
  $[0,1]^{n_1} \cap \{\alpha_N^\top x \geq \gamma+1\} = \emptyset$.
  Moreover, let $L^N\coloneqq L_{\max}^N -L_{\min}^N$ and
  $L\coloneqq \max\{L^N~|~ \text{$N$ internal node of $T'$}\} \leq n(10n^2)^{(n+2)^2}+2$.

  \begin{figure}
    \centering
    \begin{subfigure}{0.4\linewidth}
    \centering
      \begin{tikzpicture}[scale=0.6]
    \foreach \i in {1,...,7} {
        \draw [very thin,gray] (\i,0) -- (\i -2,6); 
      }
      \draw (1,1) -- (5,1) -- (5,5) --(1,5) -- (1,1);
      \node[below, gray] at (1,0){$L_{\min}$};
       \node[below, gray ] at (6,0){$L_{\max}$};
       \node[below, gray] at (7.4,-0.6){$L_{\max} +1$};
       \node[] at (6.3,5){$[0,1]^{n_1}$};
     \end{tikzpicture}
     \caption{The hyperplanes $\alpha^\top x = \gamma$ for different values of $\gamma$.}
     \label{fig:Lminmax}
   \end{subfigure}
   \hfill
   \begin{subfigure}{0.51\linewidth}
    \centering
  \scalebox{1}{
    \begin{forest}
for tree={circle, draw, inner sep=0pt, minimum size=14pt, l sep=10pt, s sep = 35pt}
[$M_1$
  [$M_2$
    [,cross]
      [$M_3$
        [$N$
          [,cross]
          [$M_4$
            [,cross]
            [,cross]
          ]
        ]
        [,cross]
    ]
  ]
  [,cross]
]
\end{forest}
}
\caption{An example for~$T'$ for which $\V^+(N) = \{\gamma^+_{M_2}\}$, $\V^-(N) = \{\gamma^-_{M_1},\gamma^-_{M_3}\}$ and $\U(N) = \{\gamma^-_N,\gamma^+_N,\gamma^-_{M_4},\gamma^+_{M_4}\}$.}
\label{fig:variablesExample}
\end{subfigure}
\caption{Illustrations of $L_{\min}$, $L_{\max}$, $\U(N)$ and
  $\V(N)$. }
\label{fig:interpol_thm_defs}
\end{figure}
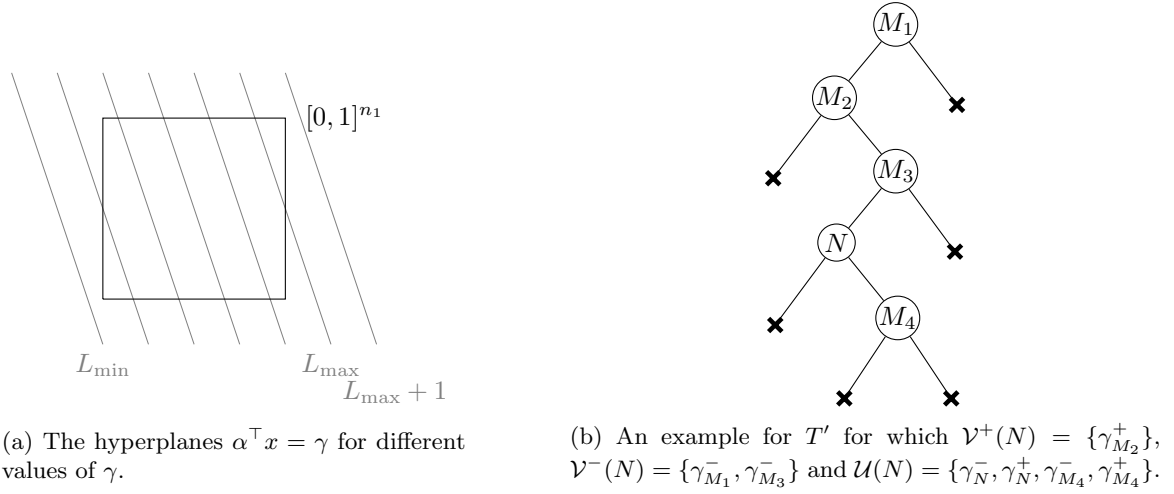

Then, for every internal node~$N$ of~$T'$, we introduce two variables
$\gamma_N^+$ and~$\gamma_N^-$. Variable~$\gamma_N^+$ represents how
far the right-hand-side of the disjunction at~$N$ is chosen away from
the lower bound~$L_{\min}^N$, while~$\gamma_N^-$ represents how
far the right-hand-side of the disjunction at~$N$ is chosen away from
the upper bound~$L_{\max}^N$. Thus, $\gamma_N = L_{\min}^N + \gamma_N^+ = L_{\max}^N - \gamma_n^-$. Hence, in order for the pair
$(\gamma_N^+,\gamma_N^-)$ to represent a valid right-hand-side for the
node~$N$, we must have~$\gamma_N^++\gamma_N^- = L^N$. Note that this
representation of~$\gamma_N$ allows us to work with the usual definition of
monotone real circuits and not deal with the case where a function is
non-increasing in an input variable.

  For every node~$N$ of~$T'$, let~$\anc(N)$ denote the set of proper ancestors of~$N$ (i.e.,
  excluding~$N$). Then define 
 \begin{align*}
   \V(N) \coloneqq \V^+(N) \cup \V^-(N) \coloneqq
    &\hphantom{\cup~} \left\{\gamma_M^+~\Big|~ \text{
     \begin{parbox}{18em}
       {$M\in \anc(N)$ and~$N$ is in the subtree rooted at the $\alpha_M^\top x \geq \gamma_M+1$-child of~$M$} 
     \end{parbox}} \right\} \\
   &  \cup  \left\{\gamma_M^-~\Big|~ \text{
     \begin{parbox}{18em}
       {$M\in \anc(N)$ and~$N$ is in the subtree rooted at the $\alpha_M^\top x \leq \gamma_M$-child of~$M$} 
     \end{parbox}}\right\}
 \end{align*}
 and
 \[
   \U(N) \coloneqq \{\gamma_M^-,\gamma_M^+ ~|~ M \text{ internal node of
     $T'$ and descendant of }N\}.
 \]
 For this definition, we consider~$N$ as a descendant of~$N$. See
 Figure~\ref{fig:variablesExample} for an example.
 
Then, for the sake of induction, we strengthen the statement of the theorem to:
 
\smallskip
 
 \begin{claim} For every node~$N$ of~$T'$, there exists a
 monotone real circuit~$C_N$ of size
 $\card{T'(N)} \cdot 2(\card{T'}+n_3)\cdot \log(L +1)^{\log
   \card{T'(N)}}$, which receives as inputs values for the variables~$\ZZ \cup \V(N)$, where $\ZZ \coloneqq \{z_1, \dots, z_{n_3}\}$
 and decides whether there exist values for the variables~$\U(N)$,
 which obey $\gamma_N^++\gamma_N^- =L^N$ and choosing
 \begin{align*}
   &\gamma_N \coloneqq L^N_{\min} +\gamma_N^+ &&\forall\, \gamma^+_N \in \U(N)\cup \V^+(N) \quad \text{and} \\
   &\gamma_N \coloneqq L^N_{\max} -\gamma_N^- &&\forall\, \gamma^-_N \in \V^-(N)
 \end{align*}
 turns every~$f^L$ attached to a leaf~$L$ in the corresponding
 subtree~$\tilde T^P(\gamma)(N)$ rooted at~$N$ in~$\tilde T^P(\gamma)$
 into a valid quasi-Farkas-certificate for~$L$ in the branch-and-bound
 tree~$\tilde T^P(\gamma)$ for~$P(z)$ relative to~$[0,1]^{n_1}$.
 \end{claim}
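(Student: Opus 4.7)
I will prove the Claim by strong induction on $|T'(N)|$, constructing $C_N$ bottom-up.

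Base case ($N$ a leaf of $T'$). The subtree $\tilde T^P(\gamma)(N)$ consists of the single node $N$ carrying a fixed candidate quasi-Farkas-certificate $f^L$, namely the projection of the certificate at the corresponding leaf of $T'$ onto the $P$- and branching constraints. The left-hand-side identity required for $f^L$ to be a genuine Farkas-certificate for $\tilde T^P$ is inherited for free from the analogous identity in $T'$ (because the branching rows of $T'$ agree with those of $\tilde T^P$ on the $x$-coordinates), so $C_N$ need only verify (a) that the right-hand-side sum is negative or (b) that some ancestor branching constraint already makes the corresponding half-space miss $[0,1]^{n_1}$. After substituting $\gamma_M = L^M_{\min}+\gamma^+_M$ on $\geq$-branches and $\gamma_M = L^M_{\max}-\gamma^-_M$ on $\leq$-branches, the right-hand-side sum rearranges to a constant minus $f_A^\top C z$ minus non-negative linear combinations of the variables in $\V^+(N)\cup\V^-(N)$, so comparing it to zero via a threshold gate is monotone in every input; clause~(b) reduces to the monotone tests $\gamma^\pm_M \geq L^M$. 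OR-ing these yields a circuit of size at most $2(|T'|+n_3)$, matching the claim since $\log(L+1)^{\log 1}=1$.

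Inductive step ($N$ internal with children $N_\leq,N_\geq$). The induction hypothesis supplies circuits $C_{N_\leq}$ and $C_{N_\geq}$ on the input sets $\ZZ\cup\V(N)\cup\{\gamma^-_N\}$ and $\ZZ\cup\V(N)\cup\{\gamma^+_N\}$. A choice of values for $\U(N)$ as in the Claim amounts to choices for $\U(N_\leq)$ and $\U(N_\geq)$ together with an integer split $\gamma^+_N+\gamma^-_N=L^N$, so the output required of $C_N$ is exactly the decision question of Corollary~\ref{cor:circ_bal} with $\kappa=\Lambda_{\max}=L^N$ and $\Lambda_{\min}=0$. The boundary hypotheses required by that corollary hold because $\gamma^+_N=L^N$ forces $\gamma_N=L^{N}_{\max}$, so the $\geq$-branching constraint is infeasible over $[0,1]^{n_1}$; clause~(b) then fires at every leaf below $N_\geq$ and $C_{N_\geq}$ evaluates to $1$ identically, and symmetrically for $\gamma^-_N=L^N$ and $C_{N_\leq}$.

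The delicate point of the size analysis is to assign the role of $C_1$ in Corollary~\ref{cor:circ_bal} to the child whose subtree is \emph{smaller}, since only the size of $C_1$ is inflated by the $\lceil\log(L+1)\rceil$ binary-search factor. Writing $|T'(N)|=s_{\mathrm{s}}+s_{\mathrm{l}}$ with $s_{\mathrm{s}}\leq|T'(N)|/2$ and invoking $\log s_{\mathrm{s}}+1\leq\log|T'(N)|$ together with $\log s_{\mathrm{l}}\leq\log|T'(N)|$, the recurrence $|C_N|\leq|C_{\mathrm{smaller}}|\cdot\lceil\log(L+1)\rceil+|C_{\mathrm{larger}}|$ collapses to the claimed bound. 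I expect the main obstacle to be the base-case monotonicity verification: the introduction of the non-negative pair $(\gamma^+_M,\gamma^-_M)$ in place of a single right-hand-side variable is essential, because a single $\gamma_M$ would enter the Farkas expression with coefficients of opposite signs on the $\leq$- and $\geq$-branches and would therefore violate monotonicity in the real-circuit model.
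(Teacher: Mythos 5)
Your proposal is correct and follows essentially the same route as the paper: induction on $\card{T'(N)}$, a base case that exploits $(f^L)^\top E = 0$ being inherited from $T'$ so only the monotone threshold test on the right-hand-side sum plus the tests $\gamma^\pm_M \geq L^M$ are needed, and an inductive step invoking Corollary~\ref{cor:circ_bal} with $\kappa=\Lambda_{\max}=L^N$, $\Lambda_{\min}=0$, binary-searching over the smaller child's circuit to get the same size recurrence. The only (immaterial) difference is that you realize the quasi-certificate clause by OR-ing explicit tests $\gamma^\pm_M\geq L^M$, whereas the paper folds these tests into the summation gates by adding a large constant $K^N$.
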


 Note that $\gamma$ is only partially defined by the definition
 given in the claim; however, all entries of $\gamma$ which
 are relevant for the validity of the quasi-Farkas-certificates
 in~$\tilde T^P(\gamma)(N)$ are defined.
 
 It suffices to show the claim, since then~$C \coloneqq C_r$
 (where~$r$ denotes the root of~$T'$) decides whether there exists a
 branch-and-bound tree~$(\tilde T)^P$ for~$P(z)$ conforming
 to~$\tilde T$ and hence separates~$Z_1$ and~$Z_2$.

 We begin by noting that for every input~$\gamma_M^\pm$ of such a
 circuit~$C_N$ corresponding to the right-hand-side of the disjunction
 used at a node~$M$ in~$\V(N)$, we have $C_N (z,\tilde{\gamma},L^M) = 1$ for all
 $(z,\tilde{\gamma})\in \R^{\ZZ\cup (\V(N)\setminus \gamma_M^\pm)}$, since then the
 side of the disjunction at~$M$ which corresponds to the branch
 containing~$N$ does not intersect~$[0,1]^{n_1}$ and hence all vectors
 attached to leaves in~$\tilde T^P(\gamma)(N)$ are valid
 quasi-Farkas-certificates.
 
 We prove the claim via induction on~$\card{\tilde T(N)}$. If
 $\card{\tilde T(N)} = 1$, then~$N$ is a leaf. Hence,
 $\U(N) = \emptyset$ and since we are given values for all variables
 from~$\ZZ \cup \V(N)$, we are given all right-hand-sides to the
 subproblem
 $\tilde T^P(\gamma)_N(P(z)) \eqqcolon \{x\in \R^{n_1}~|~ Ex \leq e\}$
 associated to the leaf~$N$ of the
 tree~$\tilde T^P(\gamma)$ for~$P(z)$.  We have to test whether~$f^N$
 is a valid quasi-Farkas-certificate.  To this end, it suffices to
 test if $(f^L)^\top e <0$, since we have~$(f^L)^\top E = 0$,
 because~$T'$ is a valid certified branch-and-bound tree for
 $P(z) \times Q(z)$ (see the proof of
 Lemma~\ref{lem:conformal_cert_interpolation}).  Note that
 $\tilde T^P(\gamma)_N(P(z))$ contains constraints which are also
 contained in~$P(z)$ and are indexed with numbers~$i\in [m_1]$ and
 constraints of the form $\alpha_M^\top x \geq \gamma_M+1$ or
 $\alpha_M^\top x \leq \gamma_M$ coming from branching, which we will
 index with the node~$M$ at which they appear in a disjunction.
  We then calculate:
  \begin{align*}
    (f^L)^\top e &= \sum_{i\in [m_1]} f^N_i(a_i-C_iz) + \sum_{\gamma^+_M\in \V^+(N)}f^N_M(-L^M_{\min}-\gamma^+_M-1)
                   + \sum_{\gamma^-_M\in \V^-(N)}f^N_M(L^M_{\max} - \gamma_M^-)\\
                 &\eqqcolon k^N - \sum_{\tau \in \ZZ\cup \V(N)} s^N_\tau \cdot \tau,
  \end{align*}
  where~$C_i$ is the~$i$-th row of~$C$ and the second line is defined by aggregating variables and constants. We note that the resulting~$s^N_\tau$ are non-negative (recall~$C$ is non-negative).
 Evidently, the sum in the second line can be computed by a monotone real
 circuit with inputs corresponding to the elements of
 $\ZZ\cup \V(N)$ and $\card{\ZZ\cup \V(N)}-1$ further gates by
 iteratively adding summands.  Note that adding~$s^N_\tau$ times
 the first input to the second input is a monotone operation, since~$s^N_\tau$ is non-negative. By post-composing the function
 applied at the output gate with the function sending numbers larger
 than~$k^N$ to~$1$ and numbers at most~$k^N$ to~$0$, we obtain a monotone
 real circuit~$\hat C_N$ that decides whether~$f^N$ is a
 Farkas-certificate.

 We modify~$\hat C_N$ to obtain a monotone real circuit~$C_N$ which
 decides whether~$f^N$ is a quasi-Farkas-certificate as follows: For
 every gate~$g$ in~$\hat C_N$ which adds~$s^N_\tau$-times the value of
 an input $\gamma_M^+ \in \V(N)$ (or~$\gamma_M^-$) to our sum, we
 modify the function applied at this gate such that it adds a very
 large constant~$K^N$ instead, if $\gamma_M^+ \geq L_M$
 ($\gamma_M^- \geq L_M$). By our definition of~$L_M$ and~$\gamma_M$,
 this is the case if and only if the side of the disjunction at the
 node~$M$ corresponding to the subtree of~$M$ containing~$N$, does not
 intersect~$[0,1]^{n_1}$, which makes~$f^N$ a valid
 quasi-Farkas-certificate relative to~$[0,1]^{n_1}$ by
 definition. Hence, if we choose~$K^N$ sufficiently large, such
 that~$C_N$ will certainly accept in this case, for example
 $K^N\coloneqq k^N+1$, then~$C_N$ correctly decides whether~$f^N$ is a
 valid quasi-Farkas-certificate. Moreover,~$C_N$ satisfies the claimed
 bound on its size.

 If~$\card{T'(N)} > 1$, we appeal to Corollary~\ref{cor:circ_bal}:
 Let~$N_\leq$ and~$N_\geq$ denote the children of~$N$. Then, by the
 induction hypothesis, there exist circuits~$C_{N_\leq}$
 and~$\smash{C_{N_\geq}}$ for these nodes as in the claim. Since
 $\V(N_\leq) = \V(N) \cup \{\gamma_N^-\} = (\V(N_\geq) \setminus
 \{\gamma_N^+\}) \cup \{\gamma_N^-\}$ and
 $\smash{C_{N_\leq} (z,\tilde{\gamma},L^N) = C_{N_\geq} (z,\tilde{\gamma},L^N)} = 1$ for all
 $(z,\tilde{\gamma})\in \smash{\R^{\ZZ\cup \V(N)}}$, we may apply
 Corollary~\ref{cor:circ_bal} to~$C_{N_\leq}$ and~$C_{N_\geq}$ (with
 $\smash{\Lambda_{\max} = L^N}$, $\Lambda_{\min} = 0$ and~$\kappa = L^N$), in a way
 which invokes the larger circuit only once.
 
 To see that~$C_N$ is no larger than claimed, assume the
 subtree~$T'(N_\leq)$ of~$T'$ rooted at~$N_\leq$ is smaller than the
 one rooted at~$N_\geq$, the other case is
 analogous. Hence,~$T'(N_\leq)$ has size at most~$\card{T'(N)}/2$
 while~$T'(N_\geq)$ has size at most~$\card{T'(N)}-1$.
    Then, compute
    \begin{align*}
      \card{C_N} & \leq \card{C_{N_\leq}} \cdot (\lceil \log(L^N+1) \rceil) + \card{C_{N_\geq}}\\
                 & \leq 
                   \card{T'(N_\leq)} \cdot 2(\card{T'}+n_3)\cdot(\lceil \log(L+1) \rceil)^{\log (\card{T'(N)}/2)} \cdot (\lceil \log(L^N+1) \rceil) \\
                 & \phantom{\leq}  +      
                   \card{T'(N_\geq)} \cdot 2(\card{T'}+n_3)\cdot(\lceil \log(L+1) \rceil)^{\log (\card{T'(N)} -1)} \\
                 & \leq 
                   \card{T'(N_\leq)} \cdot 2(\card{T'}+n_3)\cdot(\lceil \log(L+1) \rceil)^{\log \card{T'(N)}}\\
                 & \phantom{\leq}  +      
                   \card{T'(N_\geq)} \cdot 2(\card{T'}+n_3)\cdot(\lceil \log(L+1) \rceil)^{\log \card{T'(N)}} \\
                 & \leq 
                   \card{T'(N)} \cdot 2(\card{T'}+n_3)\cdot(\lceil \log(L+1) \rceil)^{\log \card{T'(N)}}.
    \end{align*}
    Finally, set~$C\coloneqq C_r$ for the circuit~$C_r$ given by the claim for the root node~$r$ of~$T'$ and note $\lceil \log(L+1) \rceil = \lceil \log (n(10n^2)^{(n+2)^2}+3) \rceil$ as well as $\card {T'} \leq (4n+5) \card T$. Hence~$C_r$ has size at most
    \begin{align*}
      & \card{T'(r)} \cdot 2(\card{T'}+n_3)\cdot(\lceil \log(L+1) \rceil)^{\log \card{T'(r)}}\\
      & \leq (4n+5) \card T \cdot 2 [(4n+5)\card T +n]\cdot [(n+2)^2\log(10n^3+3)]^{\log((4n+5)\card T)}\\
      & \leq 2(5n+5)^2\card{T}^2 \cdot  [(n+2)^2\log(10n^3+3)]^{\log((4n+5)\card T)}\\
      & \leq 50(n+1)^2\card{T}^2 \cdot [(n+2)^2\log(10n^3+3)]^{\log((4n+5)\card T)}.\qedhere
    \end{align*} 
  \end{proof}

  The computations bounding the circuit size in the recursive step are
  taken from Fleming et al.~\cite{fleming2021power} where they are
  used to show that branch-and-bound with really small coefficients
  can be quasi-polynomially simulated by cutting planes. A very similar recursive formula
  already appears in~\cite{beame1996simplified}, where it is used to
  show that branch-and-bound for variable disjunctions is
  quasi-automatizable.

  \subsection{Proof of Theorems~\ref{thm:bb_lb_BMS} and~\ref{thm:3CNF_hard}}
  
  \begin{proof}[Proof of Theorem~\ref{thm:bb_lb_BMS}]
    Assume that we have a family of branch-and-bound trees~$T$
    for~(\ref{eq:BMS_alternative}), one for each~$r$, such that
    $\smash{\card{T} \in 2^{O(n^{1/6-\epsilon})}}$ for some~$\epsilon >
    0$. Then Theorem~\ref{thm:qfeas_real_monotone_interpolation} gives
    rise to a family of circuits~$C_n$ separating the CC-pair of size
    \begin{align*}
      \card{C_n}
      & = 50(n+1)^2\card{T}^2 \cdot [(n+2)^2\log(10n^3+3)]^{\log((4n+5)\card T)} \\
      & = 50(n+1)^2 2^{2O(n^{1/6-\epsilon})} \cdot \left(2^{\log[(n+2)^2\log(10n^3+3)]}\right)^{\log(4n+5)O(n^{1/6-\epsilon})} \\
      & = 2^{O(n^{1/6-\epsilon})} \cdot 2^{\log[(n+2)^2\log(10n^3+3)]\cdot\log(4n+5)O(n^{1/6-\epsilon})}\\
      & = 2^{O(n^{1/6-\epsilon})}\cdot 2^{O(n^{\epsilon/2}) \cdot O(n^{1/6-\epsilon})}  
      = 2^{O(n^{1/6-\epsilon})}\cdot 2^{O(n^{1/6-\epsilon/2})} = 2^{O(n^{1/6-\epsilon/2})} .
    \end{align*}
    Since we have
    $n = n_1 +n_2+n_3 =  r\lfloor \frac 1 8 (r/\log r)^{2/3}\rfloor +r+(r^2-r)/2$
    and~$n_3 = r+(r^2-r)/2$
    we have
    \[
      1\leq \frac{n}{n_3} = 1 +  \frac{O(r^{5/3})}{\Omega(r^2)}  
    \]
    Since~$n\rightarrow \infty$ implies~$r\rightarrow \infty$,
    we have~$n_3 \in \Theta(n)$. But then we have
  \[\card{C_n}\in 2^{O(n^{1/6-\epsilon/2})} = 2^{O(n_3^{1/6-\epsilon/2})},\]
     which contradicts Theorem~\ref{thm:mon_circ_lb_BMS}.
   \end{proof}

   For the proof of Theorem~\ref{thm:3CNF_hard}, we require an analog of
   Theorem~8 in~\cite{hrubevs2017random}.

   Given an (unsatisfiable) CNF $\C = \{C_1,\dots,C_m\}$ and a partition of its variables $X_0\cup X_1$,
   let
   $Y_1$, $Y_2$ and $\D$
   be defined as in Section~\ref{sec:prelim}.

   \begin{obs}\label{obs:split_easy}
     Every branch-and-bound tree for the ILP~(\ref{eq:CNF_ILP}) for $\C$ and
     any partition~$X_0\cup X_1$  is also a branch-and-bound tree
     for the ILP~(\ref{eq:CNF_ILP}) for $\D$.
   \end{obs}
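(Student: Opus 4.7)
The plan is to compare the LP relaxations of the two ILPs and show that the one for $\D$ is (after projecting out the $y$ variables) a subset of the one for $\C$. This will immediately imply that infeasibility at every leaf of the tree for $\C$ transfers to infeasibility at every leaf of the tree when viewed with respect to $\D$.

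First I would write down the two LPs explicitly. For a clause $C_i = C_i^0 \cup C_i^1$, the corresponding inequality in the ILP~(\ref{eq:CNF_ILP}) for $\C$ is
\[
  \sum_{x_j\in C_i^0} x_j + \sum_{\lnot x_j\in C_i^0}(1-x_j)
  + \sum_{x_j\in C_i^1} x_j + \sum_{\lnot x_j\in C_i^1}(1-x_j) \;\geq\; 1,
\]
while in the ILP for $\D$ it is replaced by the pair
\[
  \sum_{x_j\in C_i^0} x_j + \sum_{\lnot x_j\in C_i^0}(1-x_j) + (1-y_i) \;\geq\; 1,
  \qquad
  \sum_{x_j\in C_i^1} x_j + \sum_{\lnot x_j\in C_i^1}(1-x_j) + y_i \;\geq\; 1.
\]
Summing the two $\D$-inequalities gives exactly the $\C$-inequality, so every LP-feasible point $(\hat x,\hat y)$ of the $\D$-relaxation projects to an LP-feasible point $\hat x$ of the $\C$-relaxation.

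Next I would observe that the given tree $T$ only branches on disjunctions in the $x$-variables (since it was produced for $\C$), and therefore the same tree is syntactically well-formed as a not-necessarily-valid branch-and-bound tree for the ILP for $\D$. To show validity, let $L$ be any leaf of $T$ and let $T_L(\C)$ and $T_L(\D)$ denote the LP relaxations associated to $L$ for the two ILPs. Since the branching constraints along the root-to-$L$ path involve only $x$-variables, the projection argument from the previous paragraph extends verbatim: any LP-feasible point of $T_L(\D)$ projects to an LP-feasible point of $T_L(\C)$. Contrapositively, since $T_L(\C)$ is LP-infeasible (because $T$ is valid for $\C$), $T_L(\D)$ must be LP-infeasible too.

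There is no real obstacle here beyond correctly aligning the variable sets: one must make sure to note that the $y$-variables of $\D$ are simply unconstrained by the branching decisions of $T$, and that the bound constraints $0\leq y_i\leq 1$ (implicit in the binary ILP formulation) do not cause trouble, which is immediate because they are satisfied by $\hat y_i$ defined from any $\hat x$. The conclusion is then that $T$ satisfies both Property~\ref{prop:labeling} and Property~\ref{prop:validity} with respect to the ILP for $\D$, and hence is a valid branch-and-bound tree for it.
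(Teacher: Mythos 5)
Your proposal is correct and follows essentially the same route as the paper: the paper's one-line proof is exactly your key step, namely that each original clause inequality of $\C$ is a valid inequality for the LP-relaxation of~(\ref{eq:CNF_ILP}) for $\D$ (obtained by summing the corresponding pair of $\D$-inequalities), so LP-infeasibility at each leaf transfers. Your additional leaf-by-leaf projection argument just spells out the detail the paper leaves implicit (that a tree branching only on $x$-variables remains valid when the feasible region, projected to the $x$-space, shrinks).
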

   \begin{proof}
     It suffices to note that linear constraints corresponding to the original clauses of
     $\C$ are valid inequalities for the LP-relaxation of~(\ref{eq:CNF_ILP}) for $\D$.
   \end{proof}

   \begin{lemma}\label{lem:inf_cert_interpol}
     For every branch-and-bound tree~$T$ for~(\ref{eq:CNF_ILP}) for $\C$ and
     a partition~$X_0\cup X_1$ of its variables, there is a monotone real circuit computing an
     $(X_0,X_1)$-infeasibility certificate for $\C$ of size quasi-polynomial in $n$, $m$ and $\card T$,
     i.e., size at most~$\poly(n+m+ f(n))^{\log(n+m+f(n))}$.
   \end{lemma}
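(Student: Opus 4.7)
The plan is to reduce directly to Theorem~\ref{thm:qfeas_real_monotone_interpolation} via the construction from the end of Section~\ref{sec:prelim}. Given $\C = \{C_1,\dots,C_m\}$ and the partition $X_0 \cup X_1$, I form the auxiliary CNF $\D = \D_0 \cup \D_1$ where $\D_0 = \{C_i^0 \cup \{\lnot y_i\}\}_{i \in [m]}$ is on variables $X_0 \cup Y$, $\D_1 = \{C_i^1 \cup \{y_i\}\}_{i \in [m]}$ is on variables $X_1 \cup Y$, and $Y = \{y_1,\dots,y_m\}$ is a set of fresh indicator variables. Writing the ILP~(\ref{eq:CNF_ILP}) of $\D$ in $\leq$ form, a positive literal contributes coefficient $-1$ while a negated literal contributes $+1$; hence the $Y$-coefficient matrix of the $\D_0$-block is entrywise nonnegative (each $y_i$ is negated there) while that of the $\D_1$-block is entrywise nonpositive. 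Taking $X_0$, $X_1$, $Y$ in the roles of $x$, $y$, $z$ respectively, the ILP of $\D$ therefore fits template~(\ref{eq_interpol_templ}) exactly.

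By Observation~\ref{obs:split_easy}, the given tree $T$ is also a branch-and-bound tree for the ILP of $\D$, so I would feed $(T,\D)$ straight into Theorem~\ref{thm:qfeas_real_monotone_interpolation} to obtain a monotone real circuit $F$ with input $z \in \{0,1\}^m$ that separates
\[
  Z_1 = \{z : \{C_i^0 : z_i = 1\} \text{ is satisfiable in } X_0\} \;\;\text{and}\;\; Z_2 = \{z : \{C_i^1 : z_i = 0\} \text{ is satisfiable in } X_1\},
\]
since fixing $y_i = z_i$ reduces the $\D_j$-block to exactly the indicated restriction of $\C$ along the partition. Identifying a set $A \subseteq [m]$ with its characteristic vector, the separator condition on $F$ translates directly into the two implications defining an $(X_0,X_1)$-certificate of $\C$; monotonicity of $F$ together with the fact that $Z_1$ is downward-closed and $Z_2$ upward-closed in $A$ forces $F$ to be $0$ on $Z_1$ and $1$ on $Z_2$, which is the correct orientation.

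For the size bound, $\D$ has $n + m$ variables and $T$ is reinterpreted without any change to its size, so substituting $n + m$ for $n$ in the expression from Theorem~\ref{thm:qfeas_real_monotone_interpolation} produces a circuit of size
\[
  50(n+m+1)^2 |T|^2 \cdot \bigl[(n+m+2)^2 \log(10(n+m)^3 + 3)\bigr]^{\log((4(n+m)+5)|T|)},
\]
which is quasi-polynomial in $n$, $m$, and $|T|$ as claimed. The only point of genuine care in the argument is verifying the sign pattern of the $Y$-coefficients that places $\D$ into template~(\ref{eq_interpol_templ}); once that is done, the lemma is an immediate combination of Observation~\ref{obs:split_easy} and Theorem~\ref{thm:qfeas_real_monotone_interpolation}.
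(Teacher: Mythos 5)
Your proposal is correct and follows essentially the same route as the paper: reinterpret $T$ as a branch-and-bound tree for the ILP of $\D$ via Observation~\ref{obs:split_easy}, apply Theorem~\ref{thm:qfeas_real_monotone_interpolation} to the template instance given by $\D$, and observe that the resulting monotone separator of the two satisfiability sets is exactly an $(X_0,X_1)$-certificate. Your explicit checks of the sign pattern placing $\D$ into template~(\ref{eq_interpol_templ}) and of the output orientation (via downward/upward closure and monotonicity) are details the paper leaves implicit, and your variable count $n+m$ versus the paper's $n'=2m+n$ makes no difference to the quasi-polynomial bound.
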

   
   \begin{proof}
     By Observation~\ref{obs:split_easy}, we can consider~$T$ as a
     branch-and-bound tree for~(\ref{eq:CNF_ILP}) for $\D$ and hence can
     apply Theorem~\ref{thm:qfeas_real_monotone_interpolation} to
     obtain a monotone real circuit separating $Y_0$ and $Y_1$ of
     size
     \[ 50(n'+1)^2\card{T}^2 \cdot [(n'+2)^2\log(10n'^3+3)]^{\log((4n'+5)\card T)} \in \poly(n+m+ T)^{\log(n+m+\card T)},
     \]
     where $n' =2m+n$.
       Since a monotone function separating $Y_0$ and $Y_1$ is an
       $(X_0,X_1)$-infeasibility certificate for $\C$, the lemma is
       shown.
   \end{proof}

   Finally, combining Theorem~\ref{thm:CNF_cert_compl} with Lemma~\ref{lem:inf_cert_interpol}, we obtain a proof for Theorem~\ref{thm:3CNF_hard}:

   \begin{proof}[Proof of Theorem~\ref{thm:3CNF_hard}]
     Assume that there exists a function $f\in O(2^{n^{o(1)}})$ such
     that for a random $k$-CNF $\C$
     with $O(n2^k)$ clauses and $2n$ variables there
     exists a branch-and-bound tree~$T$ refuting~(\ref{eq:CNF_ILP})
     for~$\C$ of size at most $f(n)$ with non-negligible probability,
     i.e., the probability of this occurring does not tend to $0$ for
     $n\rightarrow \infty$.  Then, due to
     Lemma~\ref{lem:inf_cert_interpol}, for any fixed
     partition~$X_0 \cup X_1$ of the variables with
     $\card{X_0} = \card{X_1}= n$ there is a monotone real circuit
     computing an $(X_0,X_1)$-certificate for $\C$ of
     size~$\poly(n+m+ f(n))^{\log(n+m+f(n))}$ with non-negligible
     probability. We may assume that $f(n) \geq \max(n,m)$ for
     simplicity, hence $\C$ has size at
     most~$g\in\poly(f(n))^{\log(f(n))}$. However, clearly
     $\smash{\poly(O\big(2^{n^{o(1)}}\big)) = O\big(2^{n^{o(1)}}\big)}$ and
     \[
       O\big(2^{n^{o(1)}}\big)^{\log(O(2^{n^{o(1)}}))}
       = O\big(2^{n^{o(1)}}\big)^{O({n^{o(1)}})} = O\big(2^{n^{o(1)}\cdot O(n^{o(1)})}\big)
       =O\big(2^{n^{o(1)}}\big) .\]
     Hence, for a random $k$-CNF
     with $O(n2^k)$ clauses and $2n$ variables with partition
     $X_0 \cup X_1$ such that $\card{X_0} = \card{X_1}= n$ and
     $k \geq c \log (n)$ there is an $(X_0,X_1)$-certificate with size
     at most $g(n)$ with non-negligible probability which contradicts
     Theorem~\ref{thm:CNF_cert_compl}.
   \end{proof}

\bibliographystyle{splncs04}
\bibliography{not_yet_feas_int}

\end{document}